\documentclass[reqno]{amsart}
\usepackage{hyperref}
\usepackage{graphicx}
\usepackage{curves}
\usepackage{float}
\usepackage{comment}
\usepackage{caption}
\usepackage{subcaption}
\usepackage{dirtytalk}
\usepackage{amsthm}
\usepackage{mathrsfs}
\usepackage{amsbsy}
\usepackage{mathtools}
\usepackage{cool}
\usepackage{stmaryrd}
\usepackage{relsize}
\theoremstyle{plain}
\usepackage{tikz-cd}
\usetikzlibrary{arrows,shapes}
\usetikzlibrary{decorations.text}
\usepackage{pgfplots}
\pgfplotsset{compat=1.14}
\usepackage[percent]{overpic}

\let\oldtocsection=\tocsection

\let\oldtocsubsection=\tocsubsection

\renewcommand{\tocsection}[2]{\hspace{0em}\oldtocsection{#1}{#2}}
\renewcommand{\tocsubsection}[2]{\hspace{1.75em}\oldtocsubsection{#1}{#2}}

\newcommand*{\mailto}[1]{\href{mailto:#1}{\nolinkurl{#1}}}

\newcommand\restr[2]{{
  \left.\kern-\nulldelimiterspace 
  #1 
  \vphantom{\big|} 
  \right|_{#2} 
  }}

\newtheorem{theorem}{Theorem}[section]
\newtheorem{proposition}[theorem]{Proposition}
\newtheorem{definition}[theorem]{Definition}
\newtheorem{lemma}[theorem]{Lemma}
\newtheorem{corollary}[theorem]{Corollary}
\newtheorem{remark}[theorem]{Remark}
\newtheorem{assumption}[theorem]{Assumption}

\newtheorem{example}[theorem]{Example}

\newcommand{\R}{{\mathbb R}}

\newcommand{\C}{{\mathbb C}}

\newcommand{\nn}{\nonumber}
\newcommand{\be}{\begin{equation}}
\newcommand{\ee}{\end{equation}}
\newcommand{\bea}{\begin{eqnarray}}
\newcommand{\eea}{\end{eqnarray}}
\newcommand{\btheo}{\begin{theorem}}
\newcommand{\etheo}{\end{theorem}}

\newcommand{\re}{\Re}
\newcommand{\im}{\Im}
\newcommand{\asympt}{\mathcal{O}}

\newcommand{\W}{\mathcal{W}}

\newcommand{\fisf}{\mathsf{\Phi}}

\newcommand{\ord}{{\mathcal O}}
\newcommand{\CO}{{\mathcal O}}

\newcommand{\eq}{\eqref}

\newcommand{\e}{\epsilon}

\newcommand{\lam}{\lambda}

\newcommand{\m}{\mu}
\newcommand{\al}{\alpha}

\numberwithin{equation}{section}

\title[WKB for  Dirac]
{Semiclassical WKB Problem for the non-self-adjoint Dirac operator} 

\author[S.Fujii\'e]{Setsuro Fujii\'e$^{\ast}$}
\address{$^{\ast}$ Department of Mathematical Sciences, Ritsumeikan University, Japan}
\email{fujiie@fc.ritsumei.ac.jp}

\author[N. Hatzizisis]{Nicholas Hatzizisis$^{\dag}$}
\address{$^{\dag}$ Department of Pure \& Applied Mathematics, 
University of Crete, Greece}
\email{\mailto{nhatzitz@gmail.com}}

\author[S. Kamvissis]{Spyridon Kamvissis$^{\ddag}$}
\address{$^{\ddag}$ Department of Pure \& Applied Mathematics, 
University of Crete, and Institute
of Applied and Computational Mathematics, FORTH, GR--711 10
Voutes Campus, Greece}
\email{\mailto{spyros@tem.uoc.gr}}

\begin{document}

\abstract
We review some recent rigorous results on the semiclassical behavior ($\epsilon\downarrow0$) 
of the scattering data of a 
non-self-adjoint Dirac operator with potential $A\exp\{iS/\epsilon\}$ where 
both $A$ and $S$ are differentiable  functions tending to constants as $x \to \pm \infty$.
We have either employed the so-called exact WKB method, 
or the  older WKB theory of Olver.
Our analysis is motivated
by the need to understand  the semiclassical behaviour of the focusing cubic NLS equation 
with initial data $A\exp\{iS/\epsilon\}$,  in view of the well-known fact discovered 
by Zakharov and Shabat that the spectral analysis of the Dirac operator enables us to obtain  
the solution of the NLS equation via inverse scattering theory.
\endabstract

\maketitle

\tableofcontents

\begin{center}
\textit{To Percy Deift for his $80^{th}$ birthday, with respect and gratitude!}
\end{center}

\bigskip
\bigskip

\section{Introduction}
\label{intro}

Consider the \textit{semiclassical limit} ($\epsilon\downarrow0$) of the solution to
the initial value problem of the one-dimensional \textit{nonlinear Schr\"odinger equation}
with cubic nonlinearity: 
\begin{equation}
\label{ivp-nls}
\left\{
\begin{array}{l}
i\epsilon\partial_t\psi+
\frac{\epsilon^2}{2}\partial_x^2\psi+
|\psi|^2\psi=
0, \quad(x,t)\in\R\times\R\\
\psi(x,0)=A(x)\exp\{iS(x)/\epsilon\},\quad x\in\R
\end{array}
\right.
\end{equation}
where $A$ and $S$ are real valued  functions defined on the real line. According to the
seminal discovery in \cite{zs},  this initial value problem can be studied via the
so-called \textit{inverse scattering method} for any fixed $\epsilon >0$.
The limit $\epsilon\downarrow0$ requires a careful analysis at two levels.
A necessary first  component of the semiclassical analysis is the $direct$ spectral
and scattering analysis of the associated \textit{Dirac} 
(or Zakharov-Shabat) operator $\mathfrak{D}_\epsilon$ given by
\begin{equation*}
\mathfrak{D}_\epsilon=
\begin{bmatrix}
-\frac{\epsilon}{i}\frac{d}{dx} & -iA(x)\exp\{iS(x)/\epsilon\}\\
-iA(x)\exp\{-iS(x)/\epsilon\} & \frac{\epsilon}{i}\frac{d}{dx}
\end{bmatrix}.
\end{equation*}
The second component is of course the analysis of the inverse problem, which  paradoxically
was the first to be studied rigorously. This was the object of a previous review dedicated
to Percy Deift 20 years ago (\cite{k}, \cite{kmm}, \cite{kr}, \cite{k2}). We have belatedly
returned to the direct problem!

Our main goal in this review  is to present some recent rigorous results on the semiclassical 
behavior of the \textit{scattering data} (reflection coefficient,  eigenvalues and their 
associated norming constants) for the operator $\mathfrak{D}_\epsilon$.

\section{The case of zero initial phase}
\label{S-zero}

We begin with the case where the initial phase $S(x)=0$. Our first result assumes the
existence of an analytic extension of the initial data $A(x)$. We have studied the
semiclassical asymptotics of the reflection coefficient and the eigenvalue distribution
of the Dirac operator
$$
L=
\begin{bmatrix}
-\frac \epsilon i\frac{d}{dx} & -iA(x) \\[8pt] 
-iA(x) & \frac \epsilon i\frac{d}{dx}
\end{bmatrix}
.
$$
Here, $\epsilon>0$ is the semiclassical small parameter and $A(x)$ is a function
satisfying the following.

\begin{description}
\item[(A1)]
$A(x)$ is a real positive smooth function on $\R$, and extends analytically to the
complex domain
$$
D_0=D(\rho_0,  \theta_0):=\{x\in \C; |\im \,x|<\max (\rho_0, (\tan\theta_0 )|\re \,x|)\}
$$
for some positive $\rho_0$ and $\theta_0$.
Moreover, there exists a positive $\tau$ such that, as $x\to\infty$ in $D_0$,
$$
A(x)=\ord (|x|^{-1-\tau}).
$$
\end{description}

Under this assumption, it is known that the spectrum of the non-self-adjoint operator
$L$ consists of the continuous spectrum  $\R$ and a finite number of eigenvalues coming
in complex conjugate pairs close to $i[-A_0,A_0]$, where $A_0:=\max_{x\in \R}A(x)>0$
when $\epsilon$ is small.

Before describing our results let's remind the reader of the definition of the
spectral/scattering data. For $\lambda\in \R\setminus \{0\}$, there exist a pair
of solutions ${\bf f}^r_+(x,\epsilon)$, ${\bf f}^r_-(x,\epsilon)$ to the equation
$L {\bf f}= \lambda {\bf f}$ which behave, as the real part of
$x, ~\re \,x\to \infty$ in $D_0$, like
$$
{\bf f}^r_+\sim
\begin{bmatrix}
0 \\
e^{i\lambda x/\epsilon}
\end{bmatrix}
, \quad
{\bf f}^r_-\sim
\begin{bmatrix}
e^{-i\lambda x/\epsilon} \\
0
\end{bmatrix}
,
$$
as well as a pair of solutions ${\bf f}^l_+(x,\epsilon)$, ${\bf f}^l_-(x,\epsilon)$
which behave, as $\re x\to -\infty$ in $D_0$, like
$$
{\bf f}^l_+\sim
\begin{bmatrix}
0 \\
e^{i\lambda x/\epsilon}
\end{bmatrix}
, \quad
{\bf f}^l_-\sim
\begin{bmatrix}
e^{-i\lambda x/\epsilon} \\
0
\end{bmatrix}
.
$$
These solutions are called  {\it Jost solutions}.

Each of these pairs is uniquely determined and forms  a basis of solutions.
Let $T(\lambda,\epsilon)$ be the 2$\times 2$ constant matrix depending on $\lambda$
and $\epsilon$ expressing the change of basis of these two pairs:
\begin{equation}
({\bf f}^l_+,{\bf f}^l_-)=({\bf f}^r_+,{\bf f}^r_-)T.
\end{equation}
Then $T$ is of the form
\begin{equation}
T(\lambda,\epsilon)=
\begin{bmatrix}
a(\lambda,\epsilon) & b^*(\lambda,\epsilon) \\
b(\lambda,\epsilon) & a^*(\lambda,\epsilon)
\end{bmatrix}
\end{equation}
where $a^*$, $b^*$ denote the complex conjugates of $a,b$. The reflection coefficient
$R(\lambda,\epsilon)$ is by definition
\begin{equation}
R(\lambda,\epsilon)=\frac{b(\lambda,\epsilon)}{a(\lambda,\epsilon)}.
\end{equation}
It is easy to see  that it can be expressed by wronskians of Jost solutions:
\begin{equation}
R(\lambda,\epsilon)=\frac{\W({\bf f}^r_+,{\bf f}^l_+)}{\W({\bf f}^l_+,{\bf f}^r_-)}.
\end{equation}

We have the following results \cite{fujii+kamvi}. For the reflection coefficient,
we first have the following result for $\lambda\in\R$ satisfying $|\lambda|\ge \delta$
with $\epsilon$-independent positive $\delta$.

\begin{theorem}
\label{ref1}
Assume (A1). Then, for any $\delta>0$, there exists $\sigma>0$ independent of $\epsilon$
such that
$$
|R(\lambda,\epsilon)|=\ord (e^{-\sigma/\epsilon}),
$$
as $\epsilon\to 0$ uniformly for $\lambda\in (-\infty, \delta]\cup [\delta,\infty)$.
\end{theorem}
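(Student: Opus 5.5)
We argue by exact WKB (or Olver-type) analysis in the complex $x$-plane, using the analyticity of $A$ in the sector $D_0$ from (A1). Write $L{\bf f}=\lambda{\bf f}$ as the first order system
$$
\epsilon\frac{d{\bf f}}{dx}=
\begin{bmatrix}
-i\lambda & A(x)\\
-A(x) & i\lambda
\end{bmatrix}{\bf f}.
$$
Its eigenvalues are $\pm\sqrt{A(x)^2-\lambda^2}$, so the phase function is $z(x)=\int^x\sqrt{A(t)^2-\lambda^2}\,dt$. For real $|\lambda|\ge\delta$, the turning points are the zeros of $A(x)^2-\lambda^2$. When $|\lambda|>A_0$ none of them is real, and when $\delta\le|\lambda|\le A_0$ they may be real. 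In either case $\sqrt{A^2-\lambda^2}\approx\pm i\lambda$ near infinity in $D_0$, since $A=\ord(|x|^{-1-\tau})$. This makes the Jost solutions WKB solutions normalized at $\pm\infty$.

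The key observation is that $\W({\bf f}^r_+,{\bf f}^l_+)$ is the Wronskian of two solutions with the same asymptotic behaviour $(0,e^{i\lambda x/\epsilon})^T$ at the two ends. This Wronskian would vanish if the exact WKB solution could be continued from $-\infty$ to $+\infty$ along a \emph{canonical} path, that is, a path along which $\re(i\lambda x\mp z)$ is monotone in the appropriate direction.

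Concretely, for $\lambda\ge\delta$ we shift the contour into the complex plane: take the path $\gamma=\{x=t+i\eta\,\mathrm{sgn}(\cdot)\}$ with a fixed small $\eta>0$, staying inside $D_0$. Along it, $\im x$ has a definite sign, so $|e^{i\lambda x/\epsilon}|=e^{-\lambda\im x/\epsilon}$ gains an exponentially small factor. We then proceed in steps.

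\textbf{Step 1.} Construct exact WKB solutions via the convergent Volterra series $\sum\epsilon^n w_n$ on the shifted line $\im x=\eta$ (or $-\eta$, depending on the sign of $\lambda$). Choose $\eta$ so small that $|A(x)|<\lambda/2$ fails nowhere relevant. Alternatively, treat $A$ as a perturbation, which works when $|A|$ is small compared with $\lambda$ on the line $\im x=\eta$ far out. Near the turning points, use instead the canonical curves of $\re z$.

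\textbf{Step 2.} Show that ${\bf f}^l_+$ and ${\bf f}^r_+$, continued analytically, coincide up to an error of size $e^{-2\lambda\eta/\epsilon}$ times a bounded factor. This follows because the Volterra integral equation for ${\bf f}^l_+ e^{-i\lambda x/\epsilon}$ along the horizontal line $\im x=\eta$ has kernel $A(x)e^{\pm 2i\lambda x/\epsilon}$, and $\int|A|<\infty$ by the decay hypothesis. This gives $|\W({\bf f}^r_+,{\bf f}^l_+)|\le Ce^{-\sigma/\epsilon}$.

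\textbf{Step 3.} Show that $|\W({\bf f}^l_+,{\bf f}^r_-)|=|a|\ge 1$ for real $\lambda$, using $|a|^2-|b|^2=1$, which follows from $\det T=1$ and the symmetry of $T$. This yields the bound for $R=b/a$.

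Uniformity for large $\lambda$ holds because the gain $e^{-2\lambda\eta/\epsilon}$ only improves as $\lambda$ grows, while the Volterra bounds use $\int|A|$, which is independent of $\lambda$.

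The main obstacle is the middle range $\delta\le|\lambda|\le A_0$. There, along the real axis the WKB exponent $\sqrt{A^2-\lambda^2}$ is real on the classically forbidden region, so naive perturbation in $A$ fails: $A$ is not small compared with $\lambda$. We must deform the path around the turning points in $D_0$ so that it remains canonical, which is possible because the width $\rho_0$ and the sector opening give room, and then verify that the accumulated $\re$ of the phase yields a net factor $e^{-\sigma/\epsilon}$. Here we would first try the Stokes geometry of $z(x)$: locate the two turning points near the real axis, and check that a curve from $-\infty$ to $+\infty$ passing above them (into $\im x>0$) is canonical for the branch matching $e^{i\lambda x/\epsilon}$. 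The decay gain would then be $\sigma=2\,\im\int_\gamma(\sqrt{A^2-\lambda^2}-i\lambda)\,dx$ minimized over compact $\lambda$-sets, which is positive by continuity and compactness.
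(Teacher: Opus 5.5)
Your overall plan is the right one: use analyticity in $D_0$ to leave the real axis, read off $b$ from $\W(\mathbf{f}^r_+,\mathbf{f}^l_+)$ (a Wronskian of two solutions with the same normalization), and bound $a$ from below. But the two places where the real work happens are not correct. First, the characteristic exponents are wrong. The coefficient matrix of your system has trace $0$ and determinant $\lambda^2+A^2$, so the exponents are $\pm i\sqrt{A^2+\lambda^2}$ and the phase is $z(x)=i\int_0^x\sqrt{A(t)^2+\lambda^2}\,dt$, as in \eqref{zx}. For real $\lambda\neq0$ there are no real turning points at all, since they solve $A(x)=\pm i\lambda$. Moreover $z$ maps $\R$ onto $i\R$, and for $|\lambda|\ge\delta$ a strip $|\im x|<\rho$ is free of turning points uniformly in $\lambda$. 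So the ``classically forbidden region'' for $\delta\le|\lambda|\le A_0$, the deformation around real turning points, and your formula for $\sigma$ all belong to a different problem. The structure $\sqrt{A^2-\lambda^2}$ is that of the self-adjoint (defocusing) operator, or of the present one at $\lambda=i\mu$. The same confusion affects Step 3. Here the real-$\lambda$ symmetry is $(f_1,f_2)\mapsto(f_2^*,-f_1^*)$, so the upper-right entry of $T$ is $-b^*$ (the display of $T$ in the text drops this sign) and $\det T=|a|^2+|b|^2=1$. Hence $|a|\ge1$ is false, although $|a|\ge1/\sqrt2$ still follows once $|b|$ is known to be small.

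Second, and decisively, Step 2 fails. The Volterra system for $m=\mathbf{f}^l_+e^{-i\lambda x/\epsilon}$ is $m_1(x)=\epsilon^{-1}\int_{-\infty}^x e^{-2i\lambda(x-y)/\epsilon}A(y)m_2(y)\,dy$, $m_2(x)=1-\epsilon^{-1}\int_{-\infty}^xA(y)m_1(y)\,dy$. The kernel carries $\epsilon^{-1}$, so $\int|A|<\infty$ gives only $\exp(\epsilon^{-1}\int|A|\,|dy|)$, not a bounded factor. This is not a weakness of the estimate. Relative to $e^{i\lambda x/\epsilon}$, the true solution carries the factor $e^{(z(x)-i\lambda x)/\epsilon}$ (up to a unimodular constant). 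On $\im x=\eta$ one has $\re z(s+i\eta)\approx-\eta\sqrt{A(s)^2+\lambda^2}$, so this factor is roughly $e^{-\eta(\sqrt{A(s)^2+\lambda^2}-\lambda)/\epsilon}$, of the same order in $\eta/\epsilon$ as the gain $e^{-2\lambda\eta/\epsilon}$ you claim. Moreover $\re z$ is not monotone along the horizontal line, so that line is not a canonical path.

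What is missing is to work with the true phase throughout:
\begin{enumerate}
\item Conjugate by $e^{\pm z/\epsilon}$, so that the symbols solve \eqref{recurrence} with the $\epsilon$-independent kernel $\mathcal{H}$.
\item For $\lambda\ge\delta$ (the case $\lambda\le-\delta$ is symmetric), identify $\mathbf{f}^r_+$ and $\mathbf{f}^l_+$, up to harmless constants, with exact WKB solutions $\mathbf{u}^+$ based at the two ends.
\item Use \eqref{wronsky++}, which writes $\W(\mathbf{f}^r_+,\mathbf{f}^l_+)$ as a bounded constant times $e^{2z_1/\epsilon}w^+_{\rm odd}(z_1,\epsilon,z_0)$.
\end{enumerate}
Now $z$ is conformal on the turning-point-free strip, and its image contains a neighbourhood of $i\R$ whose width is bounded below for $\lambda\ge\delta$ and grows like $\lambda\rho$. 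So you can place both base points at infinity on a vertical line $\re z=-c$ and integrate along it. There $|e^{2(\zeta-z)/\epsilon}|=1$, and the series for $w^+_{\rm odd}$ is bounded by $\exp\int|\mathcal{H}|\,|d\zeta|$, uniformly in $\lambda$ since $|\mathcal{H}\,d\zeta|\le C|A'|\,|dx|/\lambda$. Meanwhile $|e^{2z_1/\epsilon}|=e^{-2c/\epsilon}$, which gives the exponential smallness. This is the same mechanism that, for small $\lambda$, yields the bound $e^{-ca(\lambda)/\epsilon}$ of Theorem~\ref{ref2}, where $a(\lambda)$ measures exactly such a neighbourhood of $i\R$ in the $z$-image.
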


For an estimate of the eigenvalues, we assume moreover (for simplicity) that $A(x)$ is a 
``bell-shaped" function:

\begin{description}
\item[(A2)]
$A(x)=A(-x)$ and $A'(x)<0$ for $x>0$.
\item[(A3)]
$A''(0)<0$.
\end{description}

Let $\lambda=i\mu$ with $0<\mu<A_0=A(0)$. The assumption (A2) implies that there exists
a unique positive $x^*(\mu)$ such that there are exactly two real numbers $x^*(\mu)$
and $-x^*(\mu)$  which satisfy $A(x)=\mu$. Let us define an ``action integral"
\begin{equation}
\label{action}
H(\mu)=\int_{-x^*(\mu)}^{x^*(\mu)} \sqrt{A(x)^{2}-\mu^2}dx.
\end{equation}
Then the so-called Bohr-Sommerfeld quantization rule is written as
$$
-e^{2iH(\mu)/\epsilon}=1.
$$
The meaning of this rule is that as  $\mu$ runs over  the roots of this equation
then the values $\lambda=i\mu$  will be good estimates of the eigenvalues on
$i[-A(0),A(0)]$ as  $\epsilon\to 0$.
 
To make this more precise we will show that
there exists a function $m(\mu,\epsilon)$ asymptotic to $-1$ in the semiclassical
limit $\epsilon\to 0$  such that the eigenvalues are exactly given by $\lambda=i\mu$
with the roots of $m(\mu,\epsilon)e^{2iH(\mu)/\epsilon}=1.$

\begin{theorem}
\label{ev1}
Assume (A1), (A2) and (A3).
Then there exists a function $m(\mu,\e)$ with asymptotic behavior
$$
m(\mu,\epsilon)=-1+\ord (\epsilon)
$$
as $\epsilon\to 0$ uniformly in any closed interval $I\subset (0,A(0)]$
such that $\lambda=i\mu$ where $\mu\in I$ is an eigenvalue of $L$ if and only if
\begin{equation}
m(\mu,\epsilon)e^{2iH(\mu)/\epsilon}=1.
\end{equation}
\end{theorem}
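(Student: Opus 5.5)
We will use the exact WKB method. With $\lambda=i\mu$, $0<\mu<A(0)$, the equation $L{\bf f}=\lambda{\bf f}$ is a first-order $2\times2$ system $\epsilon\,{\bf f}'=M(x,\mu){\bf f}$ with $M=\begin{bmatrix}-\mu & A\\ A & \mu\end{bmatrix}$ (up to harmless signs). Its eigenvalues are $\pm z(x)$, where $z(x)=\sqrt{\mu^2-A(x)^2}$. The turning points are the zeros of $A(x)^2-\mu^2$. By (A2) the only real ones are $\pm x^*(\mu)$, and they are simple since $A'(\pm x^*)\neq0$. For $\mu$ in a compact $I\subset(0,A(0))$ we can shrink $D_0$ so that no other turning points enter a fixed complex neighbourhood of the real axis. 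The point $\mu=A(0)$, where the two turning points merge, needs separate treatment and is discussed at the end.

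For $|x|>x^*$ the phase $z$ is real and positive, so the Jost solutions ${\bf f}^r$ and ${\bf f}^l$ that decay at $\pm\infty$ are recessive there. An eigenvalue $\lambda=i\mu$ occurs exactly when $\W({\bf f}^l_\pm,{\bf f}^r_\mp)=0$ for the decaying pair, i.e.\ $a(i\mu,\epsilon)=0$. The decay $A=\ord(|x|^{-1-\tau})$ in (A1) lets us identify the Jost solutions with exact WKB solutions normalized at $\pm\infty$ along a suitable path in $D_0$.

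We then construct exact WKB solutions ${\bf w}^{\pm}(x;x_0)=e^{\pm\frac1\epsilon\int_{x_0}^x z}\,Q(x)\,(1+\ord(\epsilon))$ with base points at the turning points. These are given by convergent series of iterated integrals in $\epsilon$ along canonical paths, i.e.\ paths on which $\re\int z$ is monotone. Near each simple turning point we need the connection formula relating solutions on the two sides. At a simple turning point the Stokes geometry is locally Airy-like, and the exact Wronskians of WKB solutions normalized at that point are $\ord(1)$ constants of the form (standard value)$+\ord(\epsilon)$. We will derive the matrix form of this connection coefficient, with leading entries $\pm i$ and the like, by computing the Wronskians through the symbol expansion.

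Write ${\bf f}^l=c_1{\bf w}_{-x^*}$ and ${\bf f}^r=c_2{\bf w}_{x^*}$. Transporting ${\bf w}_{-x^*}$ across the well $(-x^*,x^*)$, where $z=i\sqrt{A^2-\mu^2}$ is purely imaginary, produces the factor $e^{\pm iH(\mu)/\epsilon}$ when base points are changed. The condition $\W({\bf f}^l,{\bf f}^r)=0$ therefore becomes
\[
\alpha(\mu,\epsilon)e^{iH/\epsilon}+\beta(\mu,\epsilon)e^{-iH/\epsilon}=0,\qquad m:=\alpha/\beta,
\]
where $\alpha,\beta$ are products of the turning-point Wronskians. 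Their leading values are computed from the two simple-turning-point connection formulas, giving $m=-1+\ord(\epsilon)$ uniformly on $I$. This is the Maslov phase, with the two quarter-turns each contributing a factor $i$.

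The main obstacle is uniformity up to $\mu=A(0)$, where $x^*\to0$ and the two turning points coalesce into a double turning point (A3 makes it nondegenerate: $A^2-\mu^2\approx -A(0)|A''(0)|x^2+(A(0)^2-\mu^2)$). There the simple-turning-point estimates are not uniform. We will handle this by comparing locally with the Weber (parabolic cylinder) model equation, using (A3). We expect the exact Wronskian to involve $\Gamma$-function factors of $\tfrac12+\frac{iH}{2\pi\epsilon}$-type arguments, which tend to the same limit $-1$ once uniform estimates are shown. Failing a full Weber reduction, we would at least restrict to intervals where $H(\mu)\gg\epsilon$ and treat the top separately. Since the statement includes $A(0)$, we will pursue the Weber comparison, and we anticipate that it requires the most work.
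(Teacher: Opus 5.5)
\textbf{Verdict: genuine gap at $\mu=A(0)$.} The statement includes this endpoint, and your proposal does not prove uniformity there.

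\textbf{Away from $\mu=A(0)$.} Here your plan is the paper's exact WKB argument (Section~\ref{S-non-zero} with $S=0$, $\lambda=i\mu$):
recessive exact WKB solutions at $\mp\infty$, the three-term identity of Proposition~\ref{dep} at each of $\pm x^*(\mu)$, the exact relations $\mathbf u_1^\beta=e^{z(\beta,\lambda,\alpha)/\epsilon}\mathbf u_1^\alpha$, $\mathbf u_2^\beta=e^{-z(\beta,\lambda,\alpha)/\epsilon}\mathbf u_2^\alpha$ with $z(\beta,\lambda,\alpha)=\pm iH(\mu)$, and $m$ a ratio of four Wronskians. Two precisions are needed.
\begin{enumerate}
\item Only the phase may be based at a turning point. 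The symbol base points must lie inside Stokes regions, because $\mathcal H$ is singular at turning points and the $1+\CO(\epsilon)$ asymptotics hold only along progressive paths issued from such points.
\item The value $-1$ is not a universal Maslov constant but the index $\delta(\alpha,\beta)$ of \eqref{dab}. Both real turning points are zeros of the same factor $g_-=i(\mu-A)$, so Proposition~\ref{3.567} gives $m=(2i)(2i)/(2\cdot 2)+\CO(\epsilon)$.
\end{enumerate}

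\textbf{At $\mu=A(0)$.} You assert that the estimates stop being uniform there and defer to a Weber comparison you do not carry out. Your fallback, $H\gg\epsilon$, proves a weaker statement. You also expect $\Gamma$-factors inside $m$.

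The missing observation is that the four Wronskians defining $m$ pair solutions based in \emph{adjacent} regions (left--down, left--up, right--up, right--down). Their base points, and the progressive paths joining them (real half-lines followed by quarter-circles $|x|=r$), can be kept at a fixed distance $r$ from $\{\pm x^*(\mu)\}$. The reasons are:
\begin{enumerate}
\item By (A3), $V_0\approx A(0)|A''(0)|\,(x^2-x^*(\mu)^2)$ near $0$, so the Stokes lines $l^\alpha_{1,2},l^\beta_{1,2}$ converge to the four transversal Stokes lines of a nondegenerate double turning point.
\item $\mathcal H$ stays uniformly bounded on these paths.
\item The branch bookkeeping $z(x)=-z(\hat x)$, $H(x)=\mp iH(\hat x)$ is exact analytic continuation and needs no estimate.
\end{enumerate}
Hence $m=-1+\CO(\epsilon)$ uniformly up to $A(0)$ with no Weber analysis; this is the content of the remark following Theorem~\ref{BS}. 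Had $\Gamma$-factors of the kind you predict entered $m$, they would approach their Stirling limits only as $H/\epsilon\to\infty$, and the statement would fail near $A(0)$.

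\textbf{Where coalescence does enter.} It affects the up--down Wronskian $\mathcal W[\mathbf u_1^\alpha,\mathbf u_2^\alpha]$, by which one divides when eliminating $\mathbf u_1,\mathbf u_2$. Its connecting path must cross the shrinking segment $C^0$. This is where Weber-type factors actually live, but only its non-vanishing is needed, not its asymptotics. It suffices to treat $\mu<A(0)$, since at $\mu=A(0)$ the condition $m=1$ already fails. Then, along the imaginary axis with $r$ small, $z-z(0)$ and $\mathcal H$ are real by (A2)--(A3). The transport system gives
\[
\frac{d}{dz}\big((w^+_{\rm even})^2-(w^+_{\rm odd})^2\big)=\frac4\epsilon\,(w^+_{\rm odd})^2\ge0
\]
along the progressive direction, so $|w^+_{\rm even}(z_1,\epsilon,z_2)|\ge1$.
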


Now, it is particularly important for the detailed rigorous inverse scattering
analysis to have  delicate  estimates $near~0$. We will consider  the asymptotic
behavior of the functions $R(\lambda,\e)$ and $m(\mu,\e)$ when $\lambda>0$ or
$\mu>0$ tends to $0$  as $\e \to 0$. In such a case, we need a more precise
assumption on the asymptotic behavior of the potential $A(x)$ as $|x|\to \infty$
in $D_0$.

We define a function
\begin{equation}
\label{zx}
z(x)=i\int_0^x\sqrt{A(t)^2+\lambda^2}dt,
\end{equation}
where we take the branch of the square root such that it is positive at $t=0$.
This function is well-defined and  holomorphic at least near the origin $x=0$.
It can be extended to the set $D_0$  except at the turning points, that is
the zeros of $A(t)^2+\lambda^2$; around which points it is $multi-valued.$

We first consider the case where $\lambda>0$ is small. In this case, there is no
turning point on the real axis, and the image  of the real axis by the map
$x\mapsto z(x)$ is the imaginary axis. Let $F(a)$ be the cone-like set
$$
F(a)=\{z\in \C;|\re z|<a|\im z|\}
$$
for $a>0$.
We assume
\begin{description}
\item[(A4)]
For any $\lambda>0$ small, there exist positive constants $\rho(\lambda)$,
$\theta(\lambda)$ and $a(\lambda)$ such that $D(\rho(\lambda), \theta(\lambda))$
contains no turning point and its image by the map $x\mapsto z(x)$ includes
$F(a(\lambda))$.
\end{description}

\begin{theorem}
\label{ref2}
Assume (A1), (A2) and (A4). Then there exists a positive constant $c$ such that
$$
R(\lambda,\e)=\ord \left (e^{-ca(\lambda)/\e}\right ),
$$
as $\e\to 0+$ and $\lambda\to 0+$ with $\frac\e{a(\lambda)}\to 0$.
\end{theorem}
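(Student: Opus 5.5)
We will run the exact WKB construction in the Liouville variable $z=z(x)$ of \eqref{zx}. Writing $L{\bf f}=\lambda{\bf f}$ as a first order system, $q(x)=A(x)^2+\lambda^2$ is the symbol and $z$ is its phase. We diagonalize the leading part by a gauge transformation built from $q^{1/4}$ and the ratio $A/(\lambda+\sqrt{q})$. This gives a system of the form
$$
\frac{d}{dz}\mathbf{w}=\Bigl(\frac{1}{\epsilon}\sigma_3+\mathcal{R}(z)\Bigr)\mathbf{w}.
$$
Here $\mathcal{R}$ is built from $A'/q$ and is holomorphic on the image of $D(\rho(\lambda),\theta(\lambda))$, which by (A4) contains no turning point. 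Solutions are represented as $e^{\pm z/\epsilon}$ times formal series $\sum_n \epsilon^n w_n^\pm(z)$. The coefficients are defined recursively by integrals along paths from a base point at infinity. For the Jost solutions ${\bf f}^r_+$ and ${\bf f}^l_+$ these are the points $x=+\infty$ and $x=-\infty$, and the paths go along directions where the exponential factor is monotone (canonical paths). The standard majorant-series argument (as in the proofs of Theorem~\ref{ref1} and Theorem~\ref{ev1}) shows that the series converge. It also shows that the Jost solutions coincide with these exact WKB solutions, with symbols equal to $1+\ord(\epsilon)$, uniformly as long as $\epsilon$ is small relative to the size of the region.

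For the reflection coefficient we use
$$
R=\frac{\W({\bf f}^r_+,{\bf f}^l_+)}{\W({\bf f}^l_+,{\bf f}^r_-)}.
$$
The denominator is $\sim$ const, since it is a Wronskian of solutions of different exponential types. The numerator pairs two solutions of the same type $e^{+z/\epsilon}$ normalized at opposite ends. Because the image of $\R$ is the imaginary axis, the symbols of ${\bf f}^r_+$ and ${\bf f}^l_+$ would agree if both could be normalized at a common point. Their difference is therefore governed by the off-diagonal term $e^{-2z/\epsilon}$ integrated along the imaginary axis, in a representation of the form
$$
\W({\bf f}^r_+,{\bf f}^l_+)=\int_{i\R} e^{-2z/\epsilon}\,\mathcal{R}_{12}(z)\,(1+\ord(\epsilon))\,dz
$$
plus higher-order iterates. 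The plan is to deform this contour inside $F(a(\lambda))\subset z(D(\rho,\theta))$ to a ray where $\re z$ is comparable to $a(\lambda)|\im z|$, for example the boundary ray of the cone $F(a/2)$ passing near a point with $\re z\ge c\,a(\lambda)$. On that contour $|e^{-2z/\epsilon}|\le e^{-2ca(\lambda)/\epsilon}$. The decay $A=\ord(|x|^{-1-\tau})$ in (A1) ensures $\mathcal{R}$ is integrable along the rays at infinity, so the deformation is legitimate and the remaining integral is bounded. The same deformation must be applied to the iterated integrals of the full series, which requires the majorant estimates to be valid on the cone and not only on the imaginary axis.

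The main obstacle is uniformity as $\lambda\to0$. The cone $F(a(\lambda))$ degenerates, and the turning points $x=\pm i\,\cdot$ with $A(x)=\pm i\lambda$ approach $\R$. The constants in the Neumann/majorant series must then be controlled in terms of $a(\lambda)$ alone, giving an error of order $\epsilon/a(\lambda)$; this is exactly why the hypothesis $\epsilon/a(\lambda)\to0$ appears. The first attempt will be to rescale $z=a(\lambda)\zeta$, so that the cone has fixed aperture and the effective semiclassical parameter becomes $\epsilon/a(\lambda)$. We then check that $\int|\mathcal{R}(z)||dz|$ along the deformed paths stays bounded uniformly in $\lambda$, using (A2) to control $A'/q$ near $x=0$, where $q\ge\lambda^2$. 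If that bound fails near $x=0$, we would instead place the deformed contour away from $z=0$ and use (A2) to handle the local contribution separately.
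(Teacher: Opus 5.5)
There is a genuine gap: the exponential bound you claim on the deformed contour is false near $z=0$, and the symbol factor you carry onto that contour is not controlled.

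\textbf{The contour passes through the apex.} The two halves of $F(a)$ meet only at $z=0$, and $0\notin F(a)$, since $|\re z|<a|\im z|$ forces $\im z\neq 0$. So no contour from $-i\infty$ to $+i\infty$ can be deformed ``inside $F(a(\lambda))$'', and the boundary rays of $F(a/2)$ you propose pass through $z=0$. On such a ray $|e^{-2z/\epsilon}|=e^{-a|\im z|/\epsilon}$, which equals $1$ at the apex. Hence the bound $|e^{-2z/\epsilon}|\le e^{-2ca(\lambda)/\epsilon}$ fails there. The stretch $|\im z|\lesssim \epsilon/a(\lambda)$ carries no exponential factor, so this contour gives at best a power of $\epsilon/a(\lambda)$. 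The real difficulty is crossing from $\im z<0$ to $\im z>0$ at distance $\gtrsim a(\lambda)$ from $i\R$, and the cone in (A4) gives no room for that. The missing ingredient is local at $x=0$. A fixed disc $\{|x|<r_0\}\subset D_0$ contains no turning points for all small $\lambda$, because there $|A(x)|\ge A(0)/2>\lambda$. Its image therefore contains a disc $\{|z|<r_1\}$ with $r_1$ independent of $\lambda$. Then, for example, the whole vertical line $\re z=\sigma$ with $|\sigma|=c\,a(\lambda)$, $0<c<r_1/2$, lies in $\{|z|<r_1\}\cup F(a(\lambda))$ once $a(\lambda)\le 1$; take $\sigma$ on the side where the Wronskian's exponential decays. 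Your fallback cannot replace this. (A2) is a real-axis condition: it gives $A'(0)=0$, so $\mathcal{H}$ vanishes at $z=0$, but that only improves the power. The contribution near $z=0$ is not exponentially small unless the contour itself leaves $i\R$ there.

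\textbf{The symbols are not controlled on the deformed contour.} Even on a good contour, your factor $(1+\mathcal{O}(\epsilon))$ and the majorant bounds for the iterates are not available as stated. By Theorem~\ref{formal-to-rigorous} they hold only along paths progressive from the symbol's base point. With base points at real $\pm\infty$, where $\re z=0$, a contour entering the decaying half-plane is not progressive. On it the inner kernels $e^{\pm 2(\zeta-\eta)/\epsilon}$ become exponentially large and cancel the gain. There are two ways to close this.

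(a) Use the exact WKB framework of the paper. $\mathbf{f}^r_+$ and $\mathbf{f}^l_+$ both decay as $\re x\to\pm\infty$ with $\im x>0$ in $D_0$. By uniqueness of recessive solutions, they coincide, up to bounded constants, with exact WKB solutions whose base points lie at infinity in the two halves of $F(a(\lambda))$, where $\re z\to-\infty$. Evaluate the Wronskian at a point $z^*$ with $\re z^*\le -c\,a(\lambda)$, reached from both base points by $(+)$-progressive paths in $F(a(\lambda))\cup\{|z|<r_1\}$. This gives $|\mathcal{W}(\mathbf{f}^r_+,\mathbf{f}^l_+)|\lesssim e^{2\re z^*/\epsilon}$.

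(b) Keep your base points, but move every integration variable of every iterate to the single vertical line above. There all inner kernels have modulus one, and the alternating product of exponentials leaves exactly one factor of modulus $e^{-2c\,a(\lambda)/\epsilon}$. Changing the endpoints at $\pm i\infty$ costs nothing in the limit, since $\mathcal{H}$ decays along the cone. A ray, whose real part varies, has neither property.

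The same issue affects the denominator. Your claim that it is $\sim$ const needs strictly progressive paths through the cone. Along $\R$, where $\re z$ is constant, the majorant argument bounds $w_{\rm odd}$ only by $\mathcal{O}(1)$. This is where the hypothesis $\epsilon/a(\lambda)\to 0$ is used.

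\textbf{Two smaller points.} The rescaling $z=a\zeta$ cannot fix the aperture of $F(a)$, because cones are invariant under dilations. For two solutions of type $e^{+z/\epsilon}$ the Wronskian carries $e^{+2z/\epsilon}$, not $e^{-2z/\epsilon}$.
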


\bigskip

Next we consider the case where $\lambda=i\mu$ with  $\mu>0$ is small. In this case,
there are exactly two turning points $x^*(\mu)$ and $-x^*(\mu)$ on the real axis.
By the map $x\mapsto z(x)$, the real interval $(-x^*(\mu), x^*(\mu))$ is sent to
the imaginary interval $(-z(x^*(\mu)),z(x^*(\mu)))$, and the half line
$(x^*(\mu),\infty)$ (resp. $(-\infty, -x^*(\mu))$ is sent to the half line
$z(x^*(\mu))+\R_+$ (resp. $-z(x^*(\mu))+\R_+$) when the square root in \eq{zx}
is continued from $(-x^*(\mu),x^*(\mu))$ to $(x^*(\mu),\infty)$
(resp. $(-\infty, -x^*(\mu))$ passing through the upper half plane around the turning
point $x^*(\mu)$ (resp. $-x^*(\mu)$). Notice that, as $\mu\to 0$, one has
$x^*(\mu)\to +\infty$ and
$$
|z(x^*(\mu))|\to \int_0^{+\infty}A(x)dx=:z_\infty,
$$
which is a positive finite number. Let $G(b)$ be the complex subdomain of $\C_z$
defined by
$$
\begin{array}{ll}
G(b)=&\{-b<\re z<0, |\im z|<|z(x^*(\mu))|+b)\} \\[8pt]
&\cup\{ \re z\ge 0,|z(x^*(\mu))|<|\im z|<|z(x^*(\mu))|+b\}
\end{array}
$$
for $b>0$.
We assume
\begin{description}
\item[(A5)]
For any $\mu>0$ small, there exist positive constants $\rho(\mu)$,  $\theta(\mu)$
and $b(\mu)$ such that $D(\rho(\mu), \theta(\mu))\cap \{z\in\C;\im z>0\}$ contains
no turning point and its image by the map $x\mapsto z(x)$ includes $G(b(\mu))$.

\end{description}

\begin{theorem}
\label{ev2}
Assume (A1), (A2), (A3) and (A5). Then
there exists a function $m(\mu,\e)$ with asymptotic behavior
$$
m(\mu,\e)=-1+\ord\left (\frac\e{b(\mu)}\right )
$$
as $\e\to 0+$ and $\mu\to 0+$ with  $\frac\e{b(\mu)}\to 0$,
such that $\lambda=i\mu$ is an eigenvalue of $L$ if and only if \eqref{BS} holds.
\end{theorem}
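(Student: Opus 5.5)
The plan is to run the exact WKB argument behind Theorem \ref{ev1}, while tracking how each error term depends on the width $b(\mu)$ of the domain $G(b(\mu))$ in (A5).

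\textbf{Step 1: WKB normal form in the variable $z$.} In the $z$-variable \eqref{zx}, with $\lambda=i\mu$, the equation $L{\bf f}=\lambda{\bf f}$ becomes a first-order $2\times2$ system. After diagonalizing its leading part, I would write the solutions as
\[
{\bf f}_\pm = e^{\pm z/\epsilon}\,Q(z)\,\big(w_\pm^{even}(z,\epsilon)+w_\pm^{odd}(z,\epsilon)\big).
\]
The amplitudes $w_\pm$ are formal series $\sum_n w_{\pm,n}$ generated by the recursion $w_{n+1}'=\pm\frac{2}{\epsilon}\dots$, in which each term is a primitive along a path in $z$ of a coefficient of the form $\frac{d}{dz}\log$ of the quantity $\left(\frac{A-i\lambda}{A+i\lambda}\right)^{1/4}$. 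Starting each primitive at a base point $z_0$, the recursion becomes Volterra-type iterated integrals. I would then prove convergence and the bound $w_\pm=\text{const}+\ord(\epsilon/\mathrm{dist})$ on every \emph{canonical path}, meaning a path along which $\re z$ is monotone in the appropriate direction. The key input is that the derivative of the phase coefficient, integrated along a canonical path inside a region of width $b$, is $\ord(1/b)$; this follows from Cauchy estimates in the $z$-plane. The condition $\epsilon/b(\mu)\to0$ is exactly what makes the Neumann series converge with error $\ord(\epsilon/b(\mu))$.

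\textbf{Step 2: Jost solutions as exact WKB solutions.} I would identify ${\bf f}^l_\pm$ and ${\bf f}^r_\pm$ with exact WKB solutions based at $z=-\infty$ or $z=+\infty$ along the half-lines $\pm z(x^*(\mu))+\R_+$, which are the images of the real half-lines beyond the turning points. Here (A5) guarantees that $G(b(\mu))$ contains the canonical paths needed: from the ends at infinity, around the turning points $\pm x^*(\mu)$ through the upper half plane, and across the segment $(-z(x^*),z(x^*))$.

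\textbf{Step 3: The eigenvalue condition.} The eigenvalue condition is $\W({\bf f}^l_+,{\bf f}^r_-)=0$, that is, decay at both ends. I would compute this Wronskian by re-expanding each Jost solution in a basis normalized at the turning points. The connection through each simple turning point contributes a factor of the form $(1+\ord(\epsilon/b))$, along with a phase $e^{\pm i\pi/2}$ (the Maslov factor), giving a total of $-1$. The transport between the two turning points contributes $e^{2iH(\mu)/\epsilon}$, because $z(x^*)-z(-x^*)=iH(\mu)$ by \eqref{action}. Setting $m(\mu,\epsilon)$ to be the product of the normalized Wronskian ratios, the estimate $m=-1+\ord(\epsilon/b(\mu))$ follows from Step 1, and the vanishing condition becomes exactly $m\,e^{2iH/\epsilon}=1$.

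\textbf{Main obstacle.} The hardest part is the uniformity as $\mu\to0$, when the turning points $x^*(\mu)$ escape to infinity. The standard local Airy-type analysis near a turning point would give errors depending on $|A'(x^*)|$, and this quantity degenerates. My first approach is to avoid local analysis entirely and use only Wronskians of exact WKB solutions evaluated at the endpoints of canonical paths lying in $G(b(\mu))$. In that form, all constants depend only on the geometric width $b(\mu)$, which is precisely why assumption (A5) is formulated in the $z$-plane. I would also check that (A3) is used only to ensure simplicity of the turning points, uniformly in $\mu$.
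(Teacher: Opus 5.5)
Your architecture matches the paper's exact WKB scheme. You use exact WKB solutions and the three-solution identity of Proposition~\ref{dep} at each simple turning point $\pm x^*(\mu)$. You take $m$ as the ratio of four Wronskians, as in Theorem~\ref{BS}: both turning points are zeros of $g_-$, so $\delta(\alpha,\beta)=-1$, which is your product of two factors $e^{\pm i\pi/2}$. And $2z(\beta,\lambda,\alpha)=2iH(\mu)$. (The review itself only cites the proof of Theorem~\ref{ev2}, so this is the scheme to compare against.) But what Theorem~\ref{ev2} adds to Theorem~\ref{ev1} is exactly the uniform error $\ord(\e/b(\mu))$, and both mechanisms you give for it fail as stated.

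\textbf{Geometry.} $G(b)$ is one-sided: it is part of the image of $\{\im x>0\}$. Along the Stokes segment $[-z(x^*),z(x^*)]$ it contains only the strip $-b<\re z<0$. Near each turning point it contains only the image of the upper half $x$-disk, i.e.\ $\omega^{\alpha,\beta}_{\rm up}$ and the upper halves of $\omega^{\alpha}_{\rm left}$ and $\omega^{\beta}_{\rm right}$. The connection argument needs a third base point in $\omega^{\alpha,\beta}_{\rm down}$, which lies in $\{\im x<0\}$. It also cannot be reached from $\omega^\alpha_{\rm left}$ through $\omega^{\alpha,\beta}_{\rm up}$ by a canonical path, since that path would cross two Stokes lines from the same turning point and $\re z$ could not stay monotone. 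So (A5) does not supply paths ``across the segment''. You must add the symmetry $A(\bar x)=\overline{A(x)}$, which gives $z(\bar x)=-\overline{z(x)}$ on the principal sheet, hence the mirror strip $0<\re z<b$ with the same estimates.

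\textbf{Estimates.} Cauchy estimates give $|\mathcal H|\le C/b$ only if you control the oscillation of $\log H=\frac14\log\frac{A-\mu}{A+\mu}$ on disks of radius comparable to $b$. (A5) says nothing about $H$, and $\log H$ blows up at the corners $\pm z(x^*(\mu))$ of $G(b)$. For the pointwise bound near the corners, the natural input is univalence of the inverse map $z\mapsto x$ on $G(b)$. Writing $\mathcal H=-\frac12\frac{d}{dz}\log\frac{dx}{dz}-\frac12\frac{d}{dz}\log(A+\mu)$, a Koebe-type distortion bound controls the first term by $2/{\rm dist}(z,\partial G(b))$.

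More seriously, even a uniform bound $|\mathcal H|\le C/b$ on a width-$b$ neighbourhood does not yield $\ord(\e/b)$. The canonical paths joining the base points run along the Stokes segment, of length about $2z_\infty$, on which $\re z$ varies by at most $b$. So $e^{\pm2(\zeta-z)/\e}$ gives essentially no decay there. After one integration by parts in $\mathcal I_\pm$, the odd terms are bounded by $\e\,(|\mathcal H(z)|+|\mathcal H(z_0)|+\int_\gamma|\mathcal H'|\,|d\zeta|)$. Width-$b$ Cauchy bounds $|\mathcal H'|\le C b^{-2}$ then make this, and hence your error, no better than $\e/b^2$.

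To reach $\ord(\e/b)$ you need integrated bounds such as $\int_\gamma(|\mathcal H|^2+|\mathcal H'|)\,|d\zeta|=\ord(1/b)$. These rest on $\mathcal H$ being concentrated at the turning points. There $H\sim c\,(z\mp z(x^*))^{1/6}$, so $\mathcal H\approx \frac{1}{6}(z\mp z(x^*))^{-1}$, and away from them roughly $|\mathcal H(z)|\le C|z\mp z(x^*)|^{-1}$. This information comes from the explicit form of $\mathcal H$ and (A1)--(A2), not from the width of $G(b)$. The width $b(\mu)$ enters only as the room (A5) gives canonical paths to turn the corners at distance comparable to $b$.

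\textbf{Minor points.} For $\lambda=i\mu$, $\mu>0$, ${\bf f}^l_+$ and ${\bf f}^r_-$ are the growing Jost solutions, so the eigenvalue condition is $\W({\bf f}^l_-,{\bf f}^r_+)=0$. Both ends $x\to\pm\infty$ go to $\re z\to+\infty$, not to $\re z=\mp\infty$. And (A3) concerns the top $\mu\approx A(0)$; simplicity of $\pm x^*(\mu)$ comes from (A2).
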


To make the above less opaque, consider the following important subcases.

\begin{example}
\label{pol}
Suppose $A(x)$ satifies (A1), (A2) and
$$
A(x)={C}x^{-d}+r(x) \text{ for } x>1,
$$
with $d>1$, {$C>0$} and $r(x)=o(|x|^{-d-1})$, $r'(x)=o(|x|^{-d-2})$ as
$\re x\to \infty$ in $D_0$. Then, one can take $a(\lambda)=c\lambda$ and
$b(\mu)=c\mu^{1+\frac 1{2d}}$ for some positive constant $c$.
\end{example}

\begin{example}
\label{exp}
Suppose $A(x)$ satifies (A1), (A2) and
$$
A(x)={C}e^{-x^\sigma}+r(x) \text{ for } x>1,
$$
with $\sigma>0$, $C>0$ and $r(x)=o(e^{-x^\sigma})$,
$r'(x)=o(x^{\sigma-1}e^{-x^\sigma})$ as $\re x\to \infty$ in $D_0$.
Then, one can take $a(\lambda)=c\frac{\lambda}{\log\frac{1}{\lambda}}$ and
$b(\mu)=c\mu(\log\frac 1\mu)^{-1+\frac 1{\sigma}}$ for some positive constant $c$.
\end{example}

\begin{corollary}
\label{2.5}
Assume  (A1), (A2) and (A4) with $a(\lambda)\ge c\lambda^{\beta}$ for some $\beta>0$
and $c>0$. Then the reflection coefficient $R(\lambda,\epsilon)$ is exponentially
small with respect to $\epsilon$ uniformly for  $|\lambda|\ge \epsilon^\alpha$
with any $\alpha<1/\beta$. In particular, for potentials  in examples \ref{pol}
and \ref{exp}, $R(\lambda,\epsilon)$ is exponentially small with respect to
$\epsilon$ uniformly for  $|\lambda|\ge \epsilon^\alpha$ with any $\alpha<1$.
\end{corollary}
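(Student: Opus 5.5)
The plan is to combine Theorem \ref{ref1} with Theorem \ref{ref2}, splitting the range $|\lambda|\ge\epsilon^\alpha$ into a region bounded away from zero and a shrinking neighbourhood of zero. Since $R(-\lambda,\epsilon)$ is obtained from $R(\lambda,\epsilon)$ by the symmetry $A(x)=A(-x)$ (or, if preferred, by repeating the argument for $\lambda<0$ with the mirrored construction), it suffices to treat $\lambda>0$. First fix a small $\delta>0$, small enough that (A4) applies for $0<\lambda\le\delta$ with $a(\lambda)\ge c\lambda^\beta$. On $[\delta,\infty)$, Theorem \ref{ref1} gives $|R(\lambda,\epsilon)|=\ord(e^{-\sigma/\epsilon})$ uniformly, which is exponentially small.

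On the remaining interval $\epsilon^\alpha\le\lambda\le\delta$, apply Theorem \ref{ref2}. The hypothesis there is $\epsilon/a(\lambda)\to0$, and uniformly on this interval we have
$$
\frac{\epsilon}{a(\lambda)}\le \frac{\epsilon}{c\lambda^\beta}\le \frac{\epsilon^{1-\alpha\beta}}{c}\to 0,
$$
since $\alpha<1/\beta$. Theorem \ref{ref2} then yields
$$
|R(\lambda,\epsilon)|\le C e^{-c' a(\lambda)/\epsilon}\le C e^{-c'c\,\epsilon^{\alpha\beta-1}}.
$$
This is exponentially small in the sense that it is $\ord(e^{-\epsilon^{-\gamma}})$ for some $\gamma>0$; in fact it is $\ord(\epsilon^N)$ for every $N$ and beyond any power. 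The point to be careful about, and the main obstacle, is that Theorem \ref{ref2} is phrased as an asymptotic statement along the joint limit $\epsilon\to0$, $\lambda\to0$, whereas we need a \emph{uniform} bound over the whole interval. I would handle this by re-reading the proof of Theorem \ref{ref2}, namely the exact WKB estimates on the cone $F(a(\lambda))$, to see that the implied constants depend only on an upper bound for $\epsilon/a(\lambda)$. Alternatively, one can argue by contradiction: take sequences $\epsilon_n\to0$ and $\lambda_n\in[\epsilon_n^\alpha,\delta]$ violating the bound, pass to a subsequence along which either $\lambda_n\to0$ (contradicting Theorem \ref{ref2}) or $\lambda_n\to\lambda_0>0$, which is covered by Theorem \ref{ref1} applied with a smaller $\delta$.

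For the examples, Example \ref{pol} gives $a(\lambda)=c\lambda$, so $\beta=1$. Example \ref{exp} gives $a(\lambda)=c\lambda/\log(1/\lambda)\ge c\lambda^{\beta}$ for every $\beta>1$ and small $\lambda$. In both cases, for any $\alpha<1$ we can choose $\beta>1$ with $\alpha<1/\beta$, and the general statement applies.
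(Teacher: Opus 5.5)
Your proposal is correct and is essentially the intended argument: Theorem \ref{ref1} handles $|\lambda|\ge\delta$. On $\epsilon^\alpha\le\lambda\le\delta$, the inequality $\epsilon/a(\lambda)\le\epsilon^{1-\alpha\beta}/c\to0$ puts you in the regime of Theorem \ref{ref2}, which gives $|R(\lambda,\epsilon)|\le C e^{-c_1\epsilon^{\alpha\beta-1}}$, and the two examples are handled exactly as you do. One small correction: the reduction to $\lambda>0$ comes from the reality of $A$, not from its evenness. Complex conjugation maps solutions at $\lambda$ to solutions at $-\bar\lambda$, so $R(-\lambda,\epsilon)=\overline{R(\lambda,\epsilon)}$; evenness instead exchanges left and right Jost solutions at the same $\lambda$.
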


\begin{corollary}
\label{2.7}
Assume  (A1), (A2), (A3) and (A5) with $b(\mu)\ge c\mu^{\beta}$ for some $\beta>0$
and assume also $|H'(\mu)|\ge c\mu^{\gamma}$ for some $c>0$. Then
\begin{equation}
\label{muk}
|\mu_n(\epsilon)-\mu_n^{\rm WKB}(\epsilon)|=o(\epsilon)
\end{equation}
uniformly for  $|\mu_n|\ge \epsilon^\alpha$ with any $\alpha<1/(\beta+\gamma)$.
In particular,
for potentials of example \ref{pol}, \eq{muk} holds
uniformly for $|\mu|\ge \epsilon^\alpha$ with any $\alpha<\frac d{d+1}$.
For potentials of example \ref{exp},  \eq{muk} holds
uniformly for $|\mu|\ge \epsilon^\alpha$ with any $\alpha<1$.
\end{corollary}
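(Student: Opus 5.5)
We derive Corollary \ref{2.7} from Theorem \ref{ev2} by perturbing the roots of the Bohr--Sommerfeld equation. Here $\mu_n^{\rm WKB}(\epsilon)$ denotes the roots of $-e^{2iH(\mu)/\epsilon}=1$, that is
$$
H(\mu_n^{\rm WKB})=\left(n+\tfrac12\right)\pi\epsilon .
$$
By Theorem \ref{ev2}, the exact eigenvalues are the roots of $m(\mu,\epsilon)e^{2iH(\mu)/\epsilon}=1$. Write $m=-e^{i\phi(\mu,\epsilon)}$, with $\phi$ the branch of the logarithm near $0$. Since $m=-1+\ord(\epsilon/b(\mu))$, we have $\phi=\ord(\epsilon/b(\mu))$. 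The exact condition then reads
$$
2H(\mu)+\epsilon\,\phi(\mu,\epsilon)=(2n+1)\pi\epsilon .
$$
It holds for all $\mu$ in the range once we check that $\epsilon/b(\mu)\to0$ there. Indeed, $b(\mu)\ge c\mu^\beta\ge c\epsilon^{\alpha\beta}$ and $\alpha\beta<1$, so $\epsilon/b(\mu)\le C\epsilon^{1-\alpha\beta}\to0$.

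First I would do the counting and matching. Since $H'(\mu)=-\mu\int_{-x^*}^{x^*}(A^2-\mu^2)^{-1/2}dx<0$, $H$ is strictly decreasing, so the map $\mu\mapsto 2H(\mu)+\epsilon\phi$ is monotone up to a small perturbation. I would match $\mu_n$ with $\mu_n^{\rm WKB}$ through the index $n$, using the intermediate value theorem on each interval between consecutive points where $H=(n+\frac12\pm\frac12)\pi\epsilon$. On such an interval the perturbation $\epsilon\phi$ is strictly smaller than $\pi\epsilon$, which gives existence. For uniqueness, or at least for counting the roots, I would use that the eigenvalue set is exactly the zero set of an analytic function. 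If $\phi$ is only known to be small and not differentiable, I would count roots with Rouché's theorem applied to $m e^{2iH/\epsilon}-1$ against $-e^{2iH/\epsilon}-1$ on small discs around $\mu_n^{\rm WKB}$, assuming $m$ is analytic in $\mu$, which it should be as a ratio of Wronskians of exact WKB solutions.

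Next comes the quantitative estimate. Subtracting the two quantization conditions gives $2(H(\mu_n)-H(\mu_n^{\rm WKB}))=-\epsilon\phi(\mu_n,\epsilon)$. The mean value theorem then yields
$$
|\mu_n-\mu_n^{\rm WKB}|\le\frac{\epsilon|\phi|}{2\min|H'|}\le C\frac{\epsilon^2}{b(\mu)\,\mu^{\gamma}}\le C\epsilon^{2-\alpha(\beta+\gamma)},
$$
where the minimum is taken over the segment between the two roots. This is $o(\epsilon)$ precisely because $\alpha(\beta+\gamma)<1$. One has to check that the lower bound $\mu\ge\epsilon^\alpha$ holds along the whole segment. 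It does after slightly shrinking $\alpha$, because the displacement is $o(\epsilon)\ll\epsilon^\alpha$, so $\mu$ stays comparable to $\epsilon^{\alpha}$ there.

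Finally I would specialize to the examples. For Example \ref{pol}, $\beta=1+\frac1{2d}$. To get $\gamma$, I would use $x^*(\mu)\sim(C/\mu)^{1/d}$ and estimate $H'(\mu)$. The main contribution comes from the region near the turning point, where $A^2-\mu^2\approx 2\mu|A'|(x^*-x)$ with $|A'(x^*)|\sim\mu^{1+1/d}$. This gives
$$
\int\frac{dx}{\sqrt{A^2-\mu^2}}\sim\mu^{-1}x^*\sim\mu^{-1-1/d},
\qquad\text{hence}\qquad |H'|\gtrsim\mu^{-1/d},
$$
so one may take $\gamma=-\frac1d$. Then $\beta+\gamma=1-\frac1{2d}$, which gives a bound at least as good as $\alpha<\frac d{d+1}$; the exact threshold claimed depends on the precise constants, and I would recheck this computation. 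For Example \ref{exp}, $\beta=1+\eta$ for any $\eta>0$ after absorbing the logarithms, and $H'$ grows like a power of $\log\frac1\mu$, so $\gamma\le0$ up to logarithms. This gives $\alpha<1$.

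The main obstacle is the counting and matching step: ensuring that the correspondence $n\mapsto\mu_n$ is bijective uniformly down to $\mu\sim\epsilon^\alpha$, where the spacing $\pi\epsilon/|H'|$ and the error $\epsilon^2/(b\mu^\gamma)$ must be compared. The Rouché argument handles this provided the error is much smaller than the spacing, which is exactly the condition $\alpha(\beta+\gamma)<1$.
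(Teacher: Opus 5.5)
The paper states this corollary without proof; it is quoted from \cite{fujii+kamvi}. Your core argument is the natural derivation from Theorem \ref{ev2} and is sound. You rewrite the quantization condition as $2H(\mu)+\epsilon\phi(\mu,\epsilon)=(2n+1)\pi\epsilon$ with $\phi=\ord(\epsilon/b(\mu))$. Inverting with the mean value theorem then gives $|\mu_n-\mu_n^{\rm WKB}|\le C\epsilon^2/(b(\mu_n)\mu_n^{\gamma})\le C\epsilon^{2-\alpha(\beta+\gamma)}=o(\epsilon)$. Some remarks on what you actually need:
\begin{itemize}
\item The estimate itself does not require your IVT/Rouch\'e matching. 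At an eigenvalue $\mu$ is real, so $|e^{2iH(\mu)/\epsilon}|=1$ forces $|m|=1$. Hence $\phi$ is real there, and the index $n$ (and thus $\mu_n^{\rm WKB}$) is determined by the eigenvalue itself.
\item The converse, exactly one eigenvalue near each $\mu_n^{\rm WKB}$, does \emph{not} follow from Theorem \ref{ev2} as stated. The IVT needs $|m|\equiv 1$ on the whole real interval. Rouch\'e needs an analytic extension of $m$ to complex $\mu$ with the same uniform bound. This part must be presented as extra input, not as proved.
\item Your localization of the mean-value segment is circular as written. Use the monotonicity of $H$ instead. Either $\mu_n^{\rm WKB}\ge\mu_n/2$, so the segment lies in $[\mu_n/2,\infty)$. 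Or $0\le H(\mu_n/2)-H(\mu_n)\le C\epsilon^2/b(\mu_n)$ forces $\mu_n\le C\epsilon^{2-\alpha(\beta+\gamma)}$, and then $|\mu_n-\mu_n^{\rm WKB}|\le\mu_n$ is already $o(\epsilon)$.
\item For $\mu$ bounded away from $0$ you must invoke Theorem \ref{ev1}, since Theorem \ref{ev2} is a statement about $\mu\to0$.
\end{itemize}

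There is one genuine slip. You assert $\alpha\beta<1$, which is needed for $\epsilon/b(\mu)\to0$ and hence for Theorem \ref{ev2} to apply at all, as a consequence of $\alpha(\beta+\gamma)<1$. That implication holds only if $\gamma\ge0$. In Example \ref{pol} you then take $\gamma=-1/d$, for which $1/(\beta+\gamma)=\frac{2d}{2d-1}$ exceeds $1/\beta=\frac{2d}{2d+1}$. The correct range from your argument is $\alpha<\min\{1/\beta,\,1/(\beta+\gamma)\}$, i.e.\ $\alpha<1/(\beta+\max\{\gamma,0\})$; this is legitimate because $\mu^\gamma\ge1$ for $\mu\le1$ when $\gamma<0$. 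For Example \ref{pol} this gives $\alpha<\frac{2d}{2d+1}$, which strictly contains the claimed range $\alpha<\frac{d}{d+1}$. So the stated threshold is simply not sharp, nothing depends on constants, and your ``recheck'' worry is unfounded. For Example \ref{exp} your conclusion $\alpha<1$ is right. Note, though, that for $\sigma>1$ one actually has $|H'(\mu)|\sim c(\log\frac1\mu)^{-1+1/\sigma}$, which decays rather than grows. It is still $\ge c_\eta\mu^\eta$ for every $\eta>0$, which is all you need.
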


The exact WKB method goes back to the works of Jean Ecalle \cite{e} and Andr\'e
Voros \cite{v}. Here we have made use of the formulation of \cite{gg} and the
further developments in \cite{fln}. Rather than  the usual formal WKB method
which relies on asymptotic series that are in general divergent,  we use a
``resummation" of the series and in fact construct ``exact solutions" in terms
of convergent series,  thus resolving a problem of
``asymptotics beyond all orders". We give a more detailed account of the method
in a later section (for the case with non-zero phase).

\section{The case of non-analytic data}
\label{1-hump}

The previous results employ the exact WKB theory and thus make heavy use of
analyticity. The following results use the alternative method of Olver
\cite{olver1975} which only assumes some smoothness. Rather than resumming,
one proves rigorously that the approximation of the semiclassical Dirac problem
by a parabolic cylinder function is valid in a very precise sense.
 
We have the following results \cite{h+k}. We assume that the function $A$
satisfies the following
\begin{itemize}
\item
$A(x)>0$ for $x\in\mathbb{R}$
\item
$A(-x)=A(x)$ for $x\in\mathbb{R}$
\item
$A$ is in $C^4(\R)$ and of class $C^5$ in a neighborhood of $0$
\item
$xA'(x)<0$ for $x\in\mathbb{R}\setminus\{0\}$
\item
$A''(0)<0$; we set $0<A(0)=:A_{max}$
\item
there exists $\tau>0$ so that as $x\to\pm\infty$
\begin{align*}
A(x)=\mathcal{O}\Big(\tfrac{1}{|x|^{1+\tau}}\Big)
\\
A'(x)=\mathcal{O}\Big(\tfrac{1}{|x|^{2+\tau}}\Big)
\\
A''(x)=\mathcal{O}\Big(\tfrac{1}{|x|^{3+\tau}}\Big)
\end{align*}
\end{itemize}

It has been known for a while \cite{ks} that for such $A$ the discrete spectrum
is imaginary. We have the following.

\begin{theorem}
\label{BoSo}
Suppose that $\lam=i\mu\in\{i\kappa\mid\kappa\in[A_0,A_{max}]\}$ is an
eigenvalue  of the Dirac operator   $\mathfrak{D}_{\e}$ and define
$a>0$ such that $\m=A(a)$. There exists a non-negative integer $n$
(depending both on $\m$ and $\e$) for which the Bohr-Sommerfeld quantization
condition is satisfied, i.e.
\begin{equation}
\label{BS-condition}
\displaystyle\int_{-a}^{a}
\big[A^{2}(x)-\mu^2\big]^{1/2}dx=\pi\Big(n+\tfrac{1}{2}\Big)\e+
\mathcal{O}(\e^{\frac{5}{3}})
\quad\text{as}\quad\e\downarrow0.
\end{equation}
Conversely, for every non-negative integer $n$ such that
$\pi(n+\frac{1}{2})\e \in[0,\tfrac{\pi}{2}\al_0^2]$ ($A_0 =A(\al_0)$)
there exists a unique $\mu$ and consequently an $a>0$ with
$\mu=A(a)$ (where both $\mu$ and $a$ depend on $n,\e$)
satisying
\begin{equation}\nn
\Bigg|
\displaystyle\int_{-a}^{a}
\big[A^{2}(x)-\mu^2\big]^{1/2}dx
-\pi\Big(n+\frac{1}{2}\Big)\e\Bigg|\leq C\e^{\frac{5}{3}}
\end{equation}
with a constant $C$ depending neither on $n$ nor on $\e$;
whence $\lambda=i\mu$ is an eigenvalue
of the Dirac operator  $\mathfrak{D}_{\e}$.
\end{theorem}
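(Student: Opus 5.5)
Throughout, $S\equiv 0$ and $\lambda=i\mu$ with $\mu\in[A_0,A_{max}]$. The plan is to reduce the eigenvalue problem to a scalar second-order equation with a real symmetric double-well-type potential and then apply Olver's uniform approximation for two simple turning points by parabolic cylinder functions. For the Dirac system $\epsilon u_1' = -i\lambda u_1 + A u_2$, $\epsilon u_2' = A u_1 + i\lambda u_2$ (from $\mathfrak{D}_\epsilon \mathbf{u}=\lambda\mathbf u$ with $S=0$), we form $y_\pm=u_1\pm u_2$ and eliminate one component. This gives a Schrödinger-type equation
\[
\epsilon^2 y''=\bigl[\mu^2-A(x)^2+\epsilon\,q(x)\bigr]y+\epsilon\,(\text{first-order term}).
\]
Using the Liouville substitution $y=\sqrt{A\pm\mu}\,w$ (or the equivalent symmetric normalization), we remove the first-derivative term. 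The result is
\[
w''=\bigl[\epsilon^{-2}f(x)+g(x,\epsilon)\bigr]w, \qquad f=\mu^2-A^2,
\]
where $g=\mathcal O(\epsilon^{-1})$ consists of terms involving $A'/(A\pm\mu)$ and its derivative. This is why we need $A\in C^4$ and the decay hypotheses on $A',A''$. The potential $f$ is even and has exactly two simple zeros $\pm a$, with $\mu=A(a)$, because $xA'<0$. Moreover, $f<0$ on $(-a,a)$ and $f\to\mu^2>0$ at infinity, so eigenfunctions decay like $e^{-\mu|x|/\epsilon}$, matching the Jost behaviour.

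Next we apply Olver's theorem for a pair of coalescing or separated turning points (Olver 1975, parabolic cylinder approximation). Introduce the Langer-type variable $\zeta$ via
\[
\int_{-a}^{x}\sqrt{-f}\,dt=\int_{-\alpha}^{\zeta}\sqrt{\alpha^2-s^2}\,ds,
\]
where $\alpha$ is fixed by matching the full action: $\tfrac{\pi}{2}\alpha^2=\int_{-a}^a\sqrt{A^2-\mu^2}\,dx$. Outside $[-a,a]$ we define $\zeta$ analogously with the growing integrals. This transforms the equation into Weber's equation $W''=(\epsilon^{-2}(\zeta^2-\alpha^2)+\psi(\zeta))W$ with a bounded perturbation $\psi$. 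Olver's error-control theorem then gives solutions $W=U(-\tfrac12\alpha^2/\epsilon,\sqrt{2/\epsilon}\,\zeta)(1+\text{error})$ that decay at $+\infty$, with explicit bounds on the error in terms of the variation of $\psi$. Those bounds should hold uniformly in $\mu\in[A_0,A_{max}]$, including near $\mu=A_{max}$ where the turning points coalesce. Case $A''(0)<0$ is exactly what keeps $\zeta(x)$ smooth there, using the $C^5$ regularity near $0$. By evenness of $A$, eigenfunctions are even or odd. Equivalently, an eigenvalue occurs exactly when the solution recessive at $+\infty$ is also recessive at $-\infty$. For the parabolic cylinder function this happens exactly when $\alpha^2/(2\epsilon)$ is within the error of $n+\tfrac12$, using the Wronskian/connection formula $U(-\nu-\tfrac12,-z)$ in terms of $U(-\nu-\tfrac12,z)$ and $\Gamma(\tfrac12+\nu)^{-1}$. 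Translating back, $\int_{-a}^a\sqrt{A^2-\mu^2}\,dx=\pi(n+\tfrac12)\epsilon+\text{error}$.

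The converse uses a continuity/degree argument. The connection coefficient is a continuous, real function of $\mu$ proportional to $\cos(\pi\cdot \text{action}/\epsilon)$ plus a small error. It therefore changes sign on each interval around a Bohr–Sommerfeld root, and monotonicity of $H(\mu)$, with $H'(\mu)=-\mu\int_{-a}^{a}(A^2-\mu^2)^{-1/2}dx\neq0$, gives existence. Uniqueness comes from a derivative bound on the error, or from the spacing $\sim\epsilon$ of eigenvalues. The main obstacle I expect is obtaining the error $\mathcal O(\epsilon^{5/3})$ uniformly in $\mu$, in particular near the coalescence at $\mu=A_{max}$. There the Olver error functional is controlled only through Airy-type estimates of parabolic cylinder functions in the transition regions $|\zeta\pm\alpha|\sim\epsilon^{2/3}$. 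I would try to balance a local estimate near the turning points, of size $\epsilon^{2/3}$ relative, with the $\epsilon$ gain from $g$; this balance is where the exponent $5/3$ should come from.
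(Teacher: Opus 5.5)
Your overall route is the one the paper takes (Olver's parabolic-cylinder method, as in \cite{h+k}). You reduce to $w''=[\epsilon^{-2}f+g]w$, approximate by $U(-b,\sqrt{2/\epsilon}\,\zeta)$ with $b=\alpha^2/(2\epsilon)$ uniformly through the coalescence at $\mu=A_{max}$, read the eigenvalue condition off the connection formula (zeros of $1/\Gamma(\tfrac12-b)$), and get the converse by an intermediate-value argument. But the reduction step, which is exactly what makes Olver's theorem applicable, is wrong as written. First, the second equation is $\epsilon u_2'=-Au_1+i\lambda u_2$; with your sign the coefficient matrix has determinant $-(\mu^2+A^2)$, so there are no turning points at all. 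More seriously, you assert $g=\mathcal O(\epsilon^{-1})$. Olver's error bounds are controlled by $\epsilon^{p}\,\mathcal V(H)$ for some $p\le 1$, with $H$ built from $\psi$. An $\epsilon^{-1}$ piece in $g$ makes this at least of order one, so the theorem yields nothing. Such a piece does arise, for instance, as the odd term $-\epsilon^{-1}\mu A'/A$ when you eliminate $u_2$ and set $u_1=A^{1/2}w$. It also contradicts your later claim that $\psi$ is bounded. Nor is the sign in $\sqrt{A\pm\mu}$ cosmetic: normalizing $u_1+u_2$ by $|\mu-A|^{1/2}$ produces $g\sim\tfrac34(x\mp a)^{-2}$ at the turning points, which is outside Olver's framework.

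The correct reduction can be read off from the factorization $g_\pm$ of Section~\ref{S-non-zero} with $S=0$. Put $y=u_1-u_2$ and $\tilde y=u_1+u_2$; then $\epsilon y'=(\mu+A)\tilde y$ and $\epsilon\tilde y'=(\mu-A)y$. Hence $\epsilon^2\big((\mu+A)^{-1}y'\big)'=(\mu-A)y$, and $w=(\mu+A)^{-1/2}y$ solves
\[
w''=\Big[\epsilon^{-2}(\mu^2-A^2)+\frac{3A'^2}{4(A+\mu)^2}-\frac{A''}{2(A+\mu)}\Big]w .
\]
Here $g$ is independent of $\epsilon$, regular since $A+\mu\ge A_0>0$, and $\mathcal O(|x|^{-3-\tau})$ at infinity. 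That decay, not an $\epsilon^{-1}$ term, is what the hypotheses on $A',A''$ are for; the regularity assumptions serve to keep Olver's transformed perturbation $\psi(\zeta)$ well behaved through simple and coalescing turning points. With this, the exponent $5/3$ needs no ``balance''. The $\epsilon^{2/3}$ is the accuracy of Olver's approximation uniformly in $\alpha$ down to $\alpha=0$ (the same $\epsilon^{2/3}$ as in Corollary~\ref{norm-const}), so $b=n+\tfrac12+\mathcal O(\epsilon^{2/3})$. The extra factor $\epsilon$ is just $\tfrac{\pi}{2}\alpha^2=\pi\epsilon b$. Finally, your uniqueness step is not yet an argument. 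The $\epsilon$-spacing of eigenvalues is what must be proved, and Olver's error terms come with no $\mu$-derivative bounds. One workable route: $q(x,\mu)=\epsilon^{-2}(\mu^2-A^2)+g(x,\mu)$ has $\partial_\mu q=2\mu\epsilon^{-2}+\mathcal O(1)>0$. A Sturm--Pr\"ufer argument then orders eigenvalues strictly by the zero count of the eigenfunction, and the parabolic-cylinder approximation fixes that count at $n$ near the $n$-th Bohr--Sommerfeld value.
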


The following corollary is a straightforward application of the Theorem \ref{BoSo}
giving the number of eigenvalues of the Dirac operator  in a fixed
(independent of $\e$) interval not containing 0, on the imaginary axis.
\begin{corollary}\label{ev-count-yafa}
Consider an interval $(\m_1,\m_2)\subset[A_{0},A_{max}]$ and take
$a_j$, $j=1,2$ such that $A(a_j)=\m_j$ for $j=1,2$. Then the total number
$\mathcal{N}_{\e}$ of eigenvalues $\lambda=i\mu$ of the Dirac operator
$\mathfrak{D}_{\e}$ lying in the set
$\{i\mu\mid \mu\in(\m_1,\m_2)\}\subset\mathbb{C}$
is equal to
\begin{equation}\label{EV-count}
\mathcal{N}_{\hbar}=\pi^{-1}\big[\fisf(a_1)-\fisf(a_2)\big]
\e^{-1}+R(\e)
\end{equation}
where $|R(\e)|\leq1$ for sufficiently small $\e$.
\end{corollary}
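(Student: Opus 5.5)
The plan is to convert the counting problem into counting integers $n$ in a range, using both directions of Theorem \ref{BoSo}. I take $\fisf(a):=\int_{-a}^{a}[A^2(x)-A(a)^2]^{1/2}\,dx$, the action as a function of the turning point $a$, with $\mu=A(a)$. Since $\mu_1<\mu_2$ and $A$ is strictly decreasing on $(0,\infty)$, we have $a_1>a_2$.

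The first step is monotonicity. Differentiating under the integral, the boundary terms vanish because the integrand is zero at $\pm a$. This gives
\[
\fisf'(a)=-A(a)A'(a)\int_{-a}^{a}[A^2(x)-A(a)^2]^{-1/2}\,dx>0 \quad (a>0),
\]
since $A'(a)<0$. So $\fisf$ is a strictly increasing continuous bijection from $[a_2,a_1]$ onto $[\fisf(a_2),\fisf(a_1)]$. Equivalently, $\mu\mapsto\fisf(a(\mu))$ is strictly decreasing. The main counting quantity is therefore
\[
\#\Bigl\{n\in\mathbb{Z}_{\ge0}:\ \pi\bigl(n+\tfrac12\bigr)\e\in\bigl(\fisf(a_2),\fisf(a_1)\bigr)\Bigr\},
\]
which equals $\pi^{-1}[\fisf(a_1)-\fisf(a_2)]\e^{-1}+\theta$ with $|\theta|\le 1$, by the elementary count of lattice points of spacing $\pi\e$ in an interval.

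The second step is to build a bijection between these $n$ and the eigenvalues in $\{i\mu:\mu\in(\mu_1,\mu_2)\}$.
\begin{itemize}
\item By the first half of Theorem \ref{BoSo}, each eigenvalue $i\mu$ has an associated $n$ with $\fisf(a)=\pi(n+\tfrac12)\e+\mathcal O(\e^{5/3})$.
\item By the converse half, each admissible $n$ produces a unique eigenvalue $i\mu_n$ with $|\fisf(a_n)-\pi(n+\tfrac12)\e|\le C\e^{5/3}$.
\end{itemize}
The map $n\mapsto\mu_n$ is injective because consecutive targets differ by $\pi\e\gg 2C\e^{5/3}$ and $\fisf$ is monotone. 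It is surjective onto the eigenvalues, via the first half together with the uniqueness in the converse.

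The main obstacle is the boundary: an $n$ whose value $\pi(n+\tfrac12)\e$ lies within $C\e^{5/3}$ of $\fisf(a_1)$ or $\fisf(a_2)$ may have its eigenvalue fall on either side of $\mu_1$ or $\mu_2$.
\begin{itemize}
\item At each endpoint, at most one lattice value lies in such an $\mathcal O(\e^{5/3})$ window, since the spacing $\pi\e$ dominates.
\item So the true count $\mathcal N_\e$ differs from the lattice count above by at most one ambiguous index at each end.
\end{itemize}

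To close, I combine the two errors carefully rather than simply adding them. Write $N=\pi^{-1}[\fisf(a_1)-\fisf(a_2)]\e^{-1}$. The exact lattice count is either $\lfloor N\rfloor$ or $\lceil N\rceil$, depending on the fractional parts at the endpoints. Each boundary ambiguity can only shift the count toward the other of these two values, since it corresponds to a lattice point sitting essentially at an endpoint. Hence $\mathcal N_\e\in\{\lfloor N\rfloor,\lceil N\rceil\}$, which gives $|R(\e)|\le1$ for $\e$ small.

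I also need $(\mu_1,\mu_2)\subset[A_0,A_{max}]$, so that every relevant $n$ satisfies the admissibility constraint $\pi(n+\tfrac12)\e\le\tfrac\pi2\alpha_0^2$ of Theorem \ref{BoSo}. I would verify this by comparing $\fisf(a_1)$ with that bound, after checking that it matches the theorem's normalization.
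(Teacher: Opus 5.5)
Your route is the one the paper has in mind: the paper only says the corollary is a straightforward application of Theorem \ref{BoSo}. Monotonicity of $\fisf$, the correspondence $n\leftrightarrow\mu_n$ and the lattice count are fine. One small point: for surjectivity, the uniqueness window in the converse half must be at least as wide as the $\mathcal O(\e^{5/3})$ remainder in the direct half.

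The genuine gap is your closing paragraph. The claim that each boundary ambiguity ``can only shift the count toward the other of $\lfloor N\rfloor,\lceil N\rceil$'' holds when only one endpoint is ambiguous. It fails when both are, because the two shifts can go the same way.

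Put $x=\fisf(a_2)/(\pi\e)-\tfrac12$ and $y=\fisf(a_1)/(\pi\e)-\tfrac12$, so $N=y-x$. Write $\fisf(a_n)/(\pi\e)-\tfrac12=n+\delta_n$ with $|\delta_n|\le\eta=\mathcal O(\e^{2/3})$. Then $i\mu_n$ is counted iff $n+\delta_n\in(x,y)$.

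Suppose there are integers $k=x+s$ and $m=y-t$ with $0<s,t<\eta$. Then $N=m-k+s+t$ and the lattice count is $m-k+1=\lceil N\rceil$. If $\delta_k<-s$ and $\delta_m>t$, both $\mu_k$ and $\mu_m$ leave $(\mu_1,\mu_2)$. Hence $\mathcal N_\e=m-k-1=\lfloor N\rfloor-1$ and $|R(\e)|=1+s+t>1$. The mirror configuration gives $\lceil N\rceil+1$.

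Theorem \ref{BoSo} bounds only $|\delta_n|$, not its sign, so nothing in your argument excludes this. Such double near-coincidences do recur as $\e\downarrow0$. For example, if $\fisf(a_1)=3\fisf(a_2)$, then at $\e=\fisf(a_2)/(\pi(j+\tfrac12))$, $j\in\mathbb N$, both $x$ and $y$ are integers, and they stay within $\eta$ of integers for $\e$ close enough to these values.

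What your counting argument actually proves is $|R(\e)|\le 1+2\eta=1+\mathcal O(\e^{2/3})$. Equivalently, $|R(\e)|\le c$ for any fixed $c>1$ once $\e$ is small, or $\limsup_{\e\downarrow0}|R(\e)|\le1$. The sharp constant $1$ would require information correlating the remainders $\delta_n$ at the two ends, and Theorem \ref{BoSo} does not supply it. Even an exact, strictly monotone quantization function $\Theta(\mu,\e)=\fisf(a(\mu))+\mathcal O(\e^{5/3})$ would only give $1+\mathcal O(\e^{2/3})$.

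So either state and prove the bound in this weakened form, or bring in extra structure that forces the endpoint errors to cancel. As written, your last step asserts more than it establishes.
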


Another straightforward application of Theorem \ref{BoSo} allows us to express
the norming constants of the Dirac operator
\footnote{In particular we see that the asymptotics obtained agree with the known
fact that (because of the symmetry of the potential) the corresponding norming
constant is exactly $(-1)^n$. But of course, our method is easily extended to the
non-symmetric case, see \cite{h+k2}.}. We have the following corollary.
\begin{corollary}
\label{norm-const}
Suppose that $\lam(\e)$ is an eigenvalue of the Dirac operator.
Then there is a non-negative integer $n$ (depending both on $\lam$
and $\e$) such that the corresponding norming constant has
asymptotics
\be\nn
(-1)^n + \asympt(\e^\frac{2}{3})
\quad\text{as}\quad
\e\downarrow0.
\ee
\end{corollary}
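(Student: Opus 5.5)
The plan is to read off the norming constant from the same uniform approximation that produces Theorem \ref{BoSo}. In the non-analytic setting that approximation is Olver's comparison of the Dirac system with parabolic cylinder functions. First I reduce the eigenvalue problem $\mathfrak{D}_\e {\bf f}=i\mu{\bf f}$ (with $S=0$) to a scalar second-order equation of the form
\[
y''=\big[\e^{-2}(\mu^2-A^2(x))+\psi(x,\e)\big]y .
\]
This is done by taking $y_1\pm y_2$ or a similar combination. The perturbation $\psi$ involves $A'/A$-type terms, and it is controlled by the $C^4$/$C^5$ hypotheses. The two turning points $\pm a$, with $A(\pm a)=\mu$, are then handled simultaneously by Olver's Liouville transformation $\zeta(x)$. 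It maps $[-a,a]$ onto $[-\alpha,\alpha]$ with
\[
\tfrac12\pi\alpha^2=\int_{-a}^a (A^2-\mu^2)^{1/2}dx ,
\]
and the equation becomes Weber's equation $w''=(\e^{-2}(\zeta^2-\alpha^2)+\tilde\psi)w$ up to error terms. Olver's theorem gives solutions $\approx U(\cdot)$ and $\approx \bar U(\cdot)$ (parabolic cylinder functions in the rescaled variable $\zeta\sqrt{2/\e}$). The error bounds are uniform in $\mu\in[A_0,A_{max}]$. Near $\mu=A_{max}$ the two turning points coalesce, and this is where the $\e^{2/3}$ loss originates.

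Next I identify the Jost solutions. The solution recessive at $+\infty$ is, up to normalisation, the Olver solution built on $U(-\tfrac12\alpha^2/\e,\zeta\sqrt{2/\e})$. By the parity $A(-x)=A(x)$, the one recessive at $-\infty$ is built on the same function of $-\zeta$. At an eigenvalue these two are proportional, and the norming constant is the proportionality constant $c$ with ${\bf f}^l\,=\,c\,{\bf f}^r$, correctly normalised. I compute $c$ from the connection formula
\[
U(a,-z)=-\sin(\pi a)\,U(a,z)+\frac{\pi}{\Gamma(\tfrac12+a)}\,\bar U(a,z)
\]
(in Olver's notation). By Theorem \ref{BoSo}, at an eigenvalue $\tfrac12\alpha^2/\e=n+\tfrac12+\mathcal O(\e^{2/3})$. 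Then $a=-(n+\tfrac12)+\mathcal O(\e^{2/3})$, so $1/\Gamma(\tfrac12+a)$ is $\mathcal O(\e^{2/3})$ and $-\sin(\pi a)=(-1)^n+\mathcal O(\e^{2/3})$. Hence $c=(-1)^n+\mathcal O(\e^{2/3})$, provided the coefficient of $\bar U$, which multiplies an exponentially large function, is exactly cancelled by the eigenvalue condition rather than merely small. I will use the exact proportionality at the eigenvalue to fix $c$ via the $U$-coefficient, and treat the $\bar U$ part only through the quantization.

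Finally I must check that the normalisations of the Olver solutions at $\pm\infty$ match those of the Jost solutions ${\bf f}^{l,r}_\pm$ up to factors $1+\mathcal O(\e)$. The Liouville–Green phases at $\pm\infty$ coincide by symmetry, so these factors cancel in the ratio. The error terms of Olver's theorem must also be translated into relative errors in $c$, uniformly in $n$.

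I expect the main obstacle to be the uniformity near $\mu=A_{max}$ (small $n$ relative to $1/\e$, where $\alpha\to0$). There the error control functions of Olver's theorem degrade and produce precisely the $\e^{2/3}$ rather than $\e$. I would reuse the error estimates already established for Theorem \ref{BoSo} and propagate the same $\mathcal O(\e^{2/3})$ through the connection coefficient, rather than re-deriving bounds.
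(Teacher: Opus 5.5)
Your plan is essentially the paper's argument: Olver's parabolic-cylinder approximation, the connection formula for $U(a,-z)$, and the quantization of Theorem~\ref{BoSo} (which makes $a=-\tfrac12\alpha^2/\e=-(n+\tfrac12)+\mathcal O(\e^{2/3})$) are used to evaluate the $U$-coefficient $-\sin(\pi a)$, with the dominant part removed by the exact proportionality at the eigenvalue. Three small corrections: with Olver's $\bar U=\Gamma(\tfrac12-a)V$ the formula reads $U(a,-z)=-\sin(\pi a)U(a,z)+\cos(\pi a)\bar U(a,z)$ (the factor $\pi/\Gamma(\tfrac12+a)$ goes with $V$, and it is not uniformly small in $n$); $-\sin(\pi a)$ is in fact $(-1)^n+\mathcal O(\e^{4/3})$, so the $\e^{2/3}$ comes from Olver's error terms in the Wronskians, not from the sine; and the parity of the Dirac system is $u(x)\mapsto(u_2(-x),u_1(-x))^T$, which flips the sign of the nonsingular scalar $u_1-u_2$, so with the Jost normalizations of Section~\ref{S-zero} the ratio picks up a factor $-1$ rather than $1+\mathcal O(\e)$ (harmless for the corollary as stated, where $n$ is only asserted to exist, but relevant if $n$ is meant to be the Bohr--Sommerfeld index).
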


Again, it is important for the applications to the semiclassical theory of
the focusing NLS equation, to understand the behavior of the eigenvalues near
0. As done in  \cite{fujii+kamvi} we will present results on a  specific
-but quite inclusive- family of data $A$. We do note however that there is an
explicit integral\footnote{(5.13) in \cite{h+k2}} whose (easy to check)
convergence ensures a good behavior of the eigenvalues near zero for any family
of  data $A$ (defined by explicit prescribed asymptotics at infinity).

\begin{assumption}
\label{near-0-evs-potential-assume}
Suppose further that there are real positive numbers $1<r_+ \leq s_+$,  so that
$$
\frac{C_1^+(x)}{ |x|^{s_+}} \leq A(x) \leq\frac{C_2^+(x)}{ |x|^{r_+}}\quad\text{for}\quad x>0
$$
where $C_1^+, C_2^+$ are bounded functions and $2 r_+ - s_+ > \frac{1}{3}$;
and there are real positive numbers $1<r_- \leq s_-$,  so that
$$
\frac{C_1^-(x)}{ |x|^{s_-}} \leq A(x) \leq\frac{C_2^-(x)}{ |x|^{r_-}}\quad\text{for}\quad x<0
$$
where $C_1^-, C_2^-$ are bounded functions and $2 r_- - s_- > \frac{1}{3}$.
Alternatively,  suppose there are real positive numbers $0<r\leq s$ so that
$$
C_1(x) e^{-|x|^s} \leq A(x) \leq C_2(x) e^{-|x|^r},\quad x\in\R
$$
where $C_1, C_2$ are bounded functions.
\end{assumption}

We have (see \cite{h+k} for the proof)

\begin{theorem}
Let $0<b<\frac{5}{3}$ (independent of $\e$) and consider an intial data function $A$
satisfying the above assumption.
Then for every non-negative integer $n$ such that
$\pi(n+\frac{1}{2})\e$ belongs to
$(0,\tfrac{\pi}{2}\al_0^2(\e))$
there exists a unique $\m_{n}(\e)$ satisfying
\begin{equation}\nn
|\m_n(\e)-\m_n^{WKB}(\e)|=
\asympt\big(\frac{\e^{5/3}}{\log\e}\big),
\quad\text{as}\quad\e\downarrow0
\end{equation}
uniformly for $\mu_n(\e)$ in $[\e^b,A_{max}]$.
\end{theorem}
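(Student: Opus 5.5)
We extend the Olver-type argument behind Theorem \ref{BoSo} from the fixed interval $[A_0,A_{max}]$ to the shrinking range $\mu\in[\e^b,A_{max}]$, keeping every error term explicit in $\mu$ (equivalently in the turning point $a=a(\mu)$ with $A(a)=\mu$, so $a\to\infty$ as $\mu\to0$).

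\emph{Step 1: Liouville transformation.} Write the eigenvalue problem $\mathfrak{D}_\e {\bf f}=i\mu{\bf f}$ as a second order Schr\"odinger-type equation. Apply the Liouville transformation that sends it to the parabolic cylinder (Weber) comparison equation $w''=(\e^{-2}(\zeta^2-\alpha^2)+\psi(\zeta,\mu))w$. The new variable $\zeta$ is defined by
\[
\int_{-\alpha}^{\zeta}(\alpha^2-s^2)^{1/2}ds=\int_{-a}^{x}(A^2-\mu^2)^{1/2}dt,
\]
so that $\tfrac{\pi}{2}\alpha^2=\int_{-a}^{a}(A^2-\mu^2)^{1/2}dx$.

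\emph{Step 2: Olver's theorem with explicit constants.} Apply Olver's uniform theorem for two coalescing turning points. The eigenfunction decaying at both ends then exists exactly when a connection coefficient vanishes. This condition reads $\tfrac{\pi}{2}\alpha^2=\pi(n+\tfrac12)\e+E(\mu,\e)$, where $|E|\le C\,\e\,\mathcal V(\psi)\,e^{\e\mathcal V(\psi)}$ times the usual $\e^{2/3}$-type factors. Here $\mathcal V(\psi)$ is the total variation of Olver's error-control function.

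The main obstacle is to bound $\mathcal V(\psi)$ uniformly for $\mu\ge\e^b$. Near the turning points $\pm a$, the Schwarzian-type term $\psi$ involves $A'(a)$, $A''(a)$ and the scale $a$. Using Assumption \ref{near-0-evs-potential-assume} (the two-sided bounds $C_1/|x|^{s}\le A\le C_2/|x|^{r}$ together with the derivative decay hypotheses), I would show that the relevant integral (the integral (5.13) of \cite{h+k2}) grows at most like a power of $\mu^{-1}$ in the polynomial case, or like $\log(1/\mu)$ in the exponential case. The condition $2r-s>\tfrac13$ should be exactly what makes this growth dominated, so that the resulting error is $\mathcal O(\e^{5/3})$ with at most a logarithmic loss on $[\e^b,A_{max}]$. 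I would first try splitting the $x$-integral into $|x|<a/2$, a neighborhood $|x\mp a|<a/2$ treated through Airy-type scaling of width $(\e/|AA'(a)|)^{1/3}$, and $|x|>3a/2$, bounding each piece by the tail estimates.

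\emph{Step 3: existence and uniqueness of $\mu_n$.} Put $H(\mu)=\int_{-a}^{a}(A^2-\mu^2)^{1/2}dx$; since $H'(\mu)=-\mu\int_{-a}^a (A^2-\mu^2)^{-1/2}dx$, $H$ is strictly monotone. The function $\mu\mapsto H(\mu)-\pi(n+\tfrac12)\e-E(\mu,\e)$ changes sign near $\mu_n^{WKB}$, defined by $H(\mu_n^{WKB})=\pi(n+\tfrac12)\e$, because $|E|$ is small. This gives a root, and a monotonicity or derivative bound on $E$ gives uniqueness.

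Finally, the mean value theorem gives $|\mu_n-\mu_n^{WKB}|\le |E|/\min|H'|$. The remaining estimate is a lower bound $|H'(\mu)|\gtrsim \mu\,a(\mu)$, or at least one growing like $\log(1/\mu)$ in the exponential case, which produces the $1/\log\e$ factor in the statement. This lower bound follows from the lower bounds on $A$ in Assumption \ref{near-0-evs-potential-assume}. Combining it with Step 2 gives the claimed uniform rate $\e^{5/3}/\log\e$ on $[\e^b,A_{max}]$ for $b<5/3$.
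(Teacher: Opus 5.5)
Your architecture matches what the paper points to: Olver's parabolic-cylinder comparison with $\frac{\pi}{2}\alpha^2=H(\mu)$, then a $\mu$-uniform bound on the error-control variation as the turning points $\pm a(\mu)$ run off to infinity, then inversion of $H$. The paper does not reproduce the proof; it defers to the Olver-type analysis of Hatzizisis--Kamvissis and says the near-zero control reduces to the convergence of an explicit integral built from $A$. However, the step that \emph{is} this theorem, the uniform control in Step 2, is only conjectured, and the bookkeeping you sketch is inconsistent. If $\mathcal V$ grows like a power $\mu^{-p}$, then at $\mu=\epsilon^b$ you lose a factor $\epsilon^{-bp}$, a power of $\epsilon$, not ``at most a logarithmic loss''. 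It does grow this way for $A\sim Cx^{-d}$: the region $x\asymp a(\mu)$ alone contributes an amount of order $a(\mu)^{d-1}$. Moreover, the Airy zone at $x=a$ has width of order $(\epsilon^2/|\mu A'(a)|)^{1/3}$, not $(\epsilon/|AA'(a)|)^{1/3}$. Since $\mu|A'(a)|\to0$ rapidly, requiring this width to be small compared with $a(\mu)$, as your splitting does, restricts how small $\mu$ may be relative to $\epsilon$; for $A\sim Cx^{-d}$ it reads $\mu\gg\epsilon^{d/(d-1)}$. It also needs a quantitative lower bound on $|A'(a)|$, which neither Assumption \ref{near-0-evs-potential-assume} nor the standing hypotheses provide. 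This is exactly where $b$ and the exponents $r_\pm,s_\pm$ have to enter, and you never show how $2r_\pm-s_\pm>\frac13$ or the restriction on $b$ is used.

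Step 3 does not produce the stated rate either. The bound $|H'(\mu)|\ge 2\mu a(\mu)/A_{max}$ is true, but $\mu a(\mu)=aA(a)=\mathcal O(a^{-\tau})\to0$. Dividing the quantization error by it therefore amplifies the error rather than improving it: you get $\epsilon^{5/3}/(\mu a(\mu))$, which in the exponential case is $\epsilon^{5/3-b}$ up to logarithms at $\mu=\epsilon^b$. In fact no uniform logarithmic gain can come from $H'$ at all:
\begin{itemize}
\item For $A=A_{max}\operatorname{sech}x$ (the exponential alternative with $r=s=1$), $H(\mu)=\pi(A_{max}-\mu)$, so $H'\equiv-\pi$.
\item For $A=e^{-x^2}$, $|H'(\mu)|\sim\pi/(2a(\mu))\to0$.
\item On any fixed $\mu$-interval $|H'|$ is of order one, so there the claimed rate would need $|E|=\mathcal O(\epsilon^{5/3}/|\log\epsilon|)$. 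That is sharper than what Theorem \ref{BoSo} provides, and nothing in your Step 2 yields it.
\end{itemize}
Finally, uniqueness is not automatic. Olver's theory bounds $E$ but not $\partial_\mu E$, so you need a separate argument. One option is analyticity in $\lambda$ of the relevant Wronskian together with Rouch\'e's theorem on discs of radius comparable to the local spacing $\pi\epsilon/|H'(\mu)|$. Another is an oscillation (zero-counting) argument for the real reduced equation.
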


Furthermore the norming constants are asymptotically $(-1)^n + \asympt(\e^{2/3})$
uniformly as $\e \to 0$. Finally, the reflection coefficient is small enough, even
as we approach fairly close to 0. Indeed.
\begin{theorem}
\label{spectrum+scatter}
Let $A$ satisfy the assumptions at the beginning of this section (only!), take
$0<b<\frac{1}{5}$ (independent of $\e$) and  let $s>0$. Then the reflection
coefficient satisfies
\begin{align}
\label{final-r}
R(\lam(\e),\e) &=
\asympt(\e^{1-sb})
\quad\text{as}\quad
\e\downarrow0
\end{align}
uniformly for $\lambda(\e)$ in any closed interval of $[\e^b,+\infty)$.
\end{theorem}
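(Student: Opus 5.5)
The plan is to use Olver's Liouville--Green (WKB) theory with explicit error bounds, in the regime where there are no turning points. For real $\lambda\ge \e^b$ the equation $\mathfrak{D}_\e \mathbf{f}=\lambda\mathbf{f}$, with $S=0$, is first reduced to a scalar second-order equation. One way is to set $u=f_1+f_2$, $v=f_1-f_2$, or to eliminate one component. The result has the form
\begin{equation}\nonumber
w''=\Big[-\e^{-2}\big(\lambda^2+A(x)^2\big)+\psi(x,\lambda)\Big]w ,
\end{equation}
where $\psi$ collects the terms built from $A'/A$-type quantities coming from the elimination. Since $\lambda$ is real and nonzero, the function $f(x)=\lambda^2+A^2>0$ has no zeros on $\R$, so no turning points arise. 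The Liouville transformation $\xi=\int_0^x\sqrt{\lambda^2+A^2}\,dt$, $W=f^{1/4}w$ then gives $W_{\xi\xi}=[-\e^{-2}+\Psi(\xi)]W$. Here $\Psi$ involves the Schwarzian-type term $f^{-3/4}(f^{-1/4})''$ together with the transformed $\psi$.

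Olver's theorem (Chapter 6 of \cite{olver1975}) next supplies exact solutions $W_\pm=e^{\pm i\xi/\e}(1+\eta_\pm)$ on all of $\R$, with the bound $|\eta_\pm|\le \exp\{\e\,\mathcal V(\Psi)\}-1$. The quantity $\mathcal V(\Psi)=\int_\R |\Psi|\,d\xi$ is the total variation of the error-control function. I would normalize these solutions at $+\infty$ and at $-\infty$ and match them to the Jost solutions $\mathbf f^{r}_\pm$, $\mathbf f^{l}_\pm$. Because the approximants $e^{\pm i\xi/\e}$ are single global functions with no connection problem, the off-diagonal entry $b$ of $T$ is controlled entirely by the error terms. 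Concretely, computing $R=\W(\mathbf f^r_+,\mathbf f^l_+)/\W(\mathbf f^l_+,\mathbf f^r_-)$, the numerator is $\mathcal O(\e\,\mathcal V)$ times the leading Wronskian, while the denominator is bounded below. This gives $R=\mathcal O(\e\,\mathcal V(\lambda))$, provided $\e\mathcal V$ is small.

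The main obstacle is estimating $\mathcal V(\Psi)$ uniformly as $\lambda\downarrow \e^b$, since $f$ nearly degenerates at infinity when $\lambda$ is small. The terms have the form $\int (A A')^2/(\lambda^2+A^2)^{5/2}\,dx$ and $\int |AA''+A'^2|/(\lambda^2+A^2)^{3/2}\,dx$, plus the terms from $\psi$. Using $\lambda^2+A^2\ge\lambda^2$ together with the decay hypotheses $A'=\mathcal O(|x|^{-2-\tau})$ and $A''=\mathcal O(|x|^{-3-\tau})$ (so that these integrands are integrable), each term is bounded by a negative power of $\lambda$. I would aim for $\mathcal V=\mathcal O(\lambda^{-s'})$ with $s'$ small, by splitting the integral into the region $A\ge\lambda$, where the weights are harmless because $A\ge\lambda$, and the tail $A<\lambda$, where the integrand is small. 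The worst terms are those from $\psi$, which carry $A'/A$. A cruder reduction that avoids dividing by $A$, for instance Olver's system form, can absorb them with at most a $\lambda^{-1}$ loss. Wherever the paper's restriction $b<1/5$ enters, I would accept a loss of $\lambda^{-s}$ times a fixed power. Then $\lambda\ge\e^b$ yields $\mathcal V\le C\e^{-sb}$, hence $\e\mathcal V\to0$ and $R=\mathcal O(\e^{1-sb})$, uniformly on closed subintervals of $[\e^b,\infty)$. For $\lambda$ large, $\mathcal V$ is bounded uniformly, so the same bound holds there trivially. Negative $\lambda$ follows by symmetry.
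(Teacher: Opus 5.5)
\textbf{There is a gap.} Your architecture is the right one, and it is also the route of the cited work, which uses Olver's method. For real $\lambda\neq0$ there are no turning points, so Olver's Liouville--Green theorem applies on all of $\R$ with solutions normalized at $\pm\infty$, and $R=\mathcal{O}(\epsilon\,\mathcal{V})$ as long as $\epsilon\mathcal{V}\to0$.

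The theorem's content, however, is the size of $\mathcal{V}$ as $\lambda\downarrow\epsilon^b$, and you never estimate it; you read the answer off the conclusion. ``A loss of $\lambda^{-s}$ times a fixed power'' $\lambda^{-p}$ would give $\epsilon^{1-(p+s)b}$, not $\epsilon^{1-sb}$. Moreover, a bound $\mathcal{V}=\mathcal{O}(\lambda^{-s})$ with small $s$ is false for this error-control function under these hypotheses:
\begin{itemize}
\item At $\lambda=0$ the system is solved exactly by $(\cos(\Xi/\epsilon),-\sin(\Xi/\epsilon))$, $\Xi=\int_0^xA$, and the two parts of the error-control integrand cancel identically.
\item The integrand is therefore of first order in $\lambda$, approximately $\lambda A^{-4}\bigl|\tfrac32A'^2-\tfrac12AA''\bigr|$ on $\{A\gg\lambda\}$.
\item For $A=\mathrm{sech}\,x$, which satisfies every assumption, this is $\approx\lambda A^{-2}$, and integrating over $\{K\lambda\le A\le\tfrac12\}$ already gives $\mathcal{V}\ge c\,\lambda^{-1}$.
\end{itemize}

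So the method yields a fixed power, and that power must appear in the exponent. Put $h=\lambda^2+A^2$ and bound every denominator below by $\lambda$. The worst term $\tfrac5{16}h'^2h^{-5/2}$ is at most $\tfrac54\lambda^{-5}(AA')^2$. The remaining terms are $\mathcal{O}(\lambda^{-3})$ times functions that are integrable by the decay of $A,A',A''$. Hence $\mathcal{V}=\mathcal{O}(\lambda^{-5})$ and $R=\mathcal{O}(\epsilon^{1-5b})$, and $\epsilon\mathcal{V}\to0$ exactly when $b<\tfrac15$. That is the natural origin of the range of $b$ in the statement. With your method the exponent has to be read with $s$ equal to the power you actually prove ($s=5$ here), not as an arbitrary $s>0$.

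You also need to commit to the reduction. Eliminating one component gives
$f_1''-\tfrac{A'}{A}f_1'+\bigl[\epsilon^{-2}(\lambda^2+A^2)-i\lambda\epsilon^{-1}\tfrac{A'}{A}\bigr]f_1=0$.
The $\epsilon^{-1}$ term is not of Olver's form $\epsilon^{-2}f+g$, and its imaginary part alone forces $\epsilon\mathcal{V}\ge\int_{\{A<\lambda\}}|A'|/(\sqrt2\,A)\,dx=\infty$. Use instead $u=f_1+f_2$, $v=f_1-f_2$. Then $\epsilon u'=-(A+i\lambda)v$ and $\epsilon v'=(A-i\lambda)u$. Setting $v=(A-i\lambda)^{1/2}y$ gives $y''=[-\epsilon^{-2}(\lambda^2+A^2)+g]y$ with $g=\tfrac34A'^2(A-i\lambda)^{-2}-\tfrac12A''(A-i\lambda)^{-1}$. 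No $A'/A$ terms occur, since $|A\mp i\lambda|=(A^2+\lambda^2)^{1/2}\ge\lambda$, so the ``worst terms'' you planned to absorb do not exist. Finally, when you return from $y$ to $(f_1,f_2)$ to compute the Wronskians, you also need Olver's bound on $\partial_x\eta_\pm$, because $u=\epsilon v'/(A-i\lambda)$.
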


In \cite{h+k2} we have $extended$ the above results to the case where $A$ is not
symmetric and has many (finite) local maxima. 

\section{The case of non-trivial initial phase}
\label{S-non-zero}

We finally consider the case where $S$ is  not identically zero. We  definitely
need some analyticity or at least meromorphicity properties, since both existing
WKB methods (exact WKB and Olver's method) rely on some kind of analyticity for
the phase. In the examples we have considered $A, S$ admiting a meromorphic
extension in the complex plane.

In \cite{h} and \cite{fujii+hatzi+kamvi} we have studied the special case
$A(x)=S(x)=\sech(2x)$ using a combination of the two WKB methods; our work
was inspired by an older paper by Peter Miller (\cite{mil}) and also some
older numerical observations by Jared Bronski \cite{bron}. Understanding fully
this particular example gives an insight to the most general generic
meromorphic case. Semiclassically the spectrum accumulates along a countable
union of analytic arcs.

\subsection{The general method}
\label{general-method-ev}

This section  presents a modification of the procedure  proposed  in \cite{mil}
for finding the semiclassical eigenvalues of the Dirac (or Zakharov-Shabat)
operator
\begin{equation}
\label{dirac}
\mathfrak{D}_\epsilon=
\begin{bmatrix}
-\frac{\epsilon}{i}\frac{d}{dx} & \omega(x,\epsilon)\\
-\omega^{*}(x,\epsilon) & \frac{\epsilon}{i}\frac{d}{dx}
\end{bmatrix}
\end{equation}
where $\omega$ is the complex potential function
\be\nn
\omega(x,\epsilon)=-iA(x)\exp\Big\{i\frac{S(x)}{\epsilon}\Big\}
\ee
for some real-valued, analytic  functions $A(x)$ and $S(x)$
defined on the real line
[of course $\omega^{*}(x,\epsilon)$ represents the complex conjugate of
$\omega(x,\epsilon)$] such that $A$ and $S'$ are integrable. 
We are interested in the spectral  problem
\begin{equation}
\label{ev-problem}
\mathfrak{D}_\epsilon\textbf{u}=\lambda\textbf{u}
\end{equation}
where $\textbf{u}=[u_1\hspace{3pt}u_2]^T$ is a function from $\R$ to $\C^2$
and $\lambda\in\mathbb{C}$ plays the role of the spectral parameter.
It is clear that (\ref{ev-problem}) can be written equivalently as a first order
system of differential equations, namely
\begin{equation}
\label{miller-ev}
\frac{\epsilon}{i}\frac{d}{dx}\textbf{u}(x,\lambda,\epsilon)=
K(x,\lambda,\epsilon) \textbf{u}(x,\lambda,\epsilon)
\end{equation}
where
\begin{equation}
\label{matrixN}
K(x,\lambda,\epsilon)=
\begin{bmatrix}
-\lambda & \omega(x,\epsilon)\\
\omega^{*}(x,\epsilon) & \lambda \end{bmatrix}.
\end{equation}

Applying first the transformation
\begin{equation*}
\textbf{u}(x,\lambda,\epsilon)=
\begin{bmatrix}
\exp\big\{i\frac{S(x)}{2\epsilon}\big\} & 0\\
0 & \exp\big\{-i\frac{S(x)}{2\epsilon}\big\}
\end{bmatrix}
\tilde{\textbf{v}}(x,\lambda,\epsilon)
\end{equation*}
where $\tilde{\textbf{v}}=[\tilde{v}_1\hspace{3pt}\tilde{v}_2]^T$,  takes the system in
(\ref{miller-ev}) to a new form
\begin{equation}
\label{dirac-reduced}
\frac{\epsilon}{i}\frac{d}{dx}\tilde{\textbf{v}}(x,\lambda,\epsilon)=
\begin{bmatrix}
-\lambda-\frac{1}{2}S'(x) & -iA(x)\\
iA(x) & \lambda+\frac{1}{2}S'(x)
\end{bmatrix}
\tilde{\textbf{v}}(x,\lambda,\epsilon)
\end{equation}
where prime denotes differentiation with respect to $x$.  Next, we apply the mapping
\begin{equation*}
\tilde{\textbf{v}}(x,\lambda,\epsilon)=
\begin{bmatrix}
1 & 1\\
-1 & 1
\end{bmatrix}
\textbf{v}(x,\lambda,\epsilon)
\end{equation*}
where
$\textbf{v}=[v_1\hspace{3pt}v_2]^T$, to finally express the initial system as
\begin{equation}
\label{miller-final}
\frac{\epsilon}{i}\frac{d}{dx}\textbf{v}(x,\lambda,\epsilon)
=M(x,\lambda)\textbf{v}(x,\lambda,\epsilon)
\end{equation}
where
\be
\label{mu-matrix}
M(x,\lambda)=
\begin{bmatrix}
0 & g_+(x,\lambda)\\
-g_-(x,\lambda) & 0
\end{bmatrix}
\ee
\begin{equation}
\label{gigi}
\text{and}\quad g_{\pm}(x,\lambda)=
\mp[\lambda+\frac{1}{2}S'(x)\pm iA(x)].
\end{equation}

The key function to study  is
\begin{align}
\label{turn-pt}
V_0(x,\lambda)
&=
\det M(x,\lambda)\nn\\
&=
g_{-}(x,\lambda)g_{+}(x,\lambda)\nn\\
&=
-[\lambda+\tfrac{1}{2}S'(x)]^2-A^{2}(x).
\end{align}
The zeros of this function play an important role.
\begin{definition}
\label{definition-turn-pt}
For a fixed value of $\lambda\in\mathbb{C}$,  the zeros of $V_0(\cdot,\lambda)$
in $\mathbb{C}$ are called  \textbf{turning points} of (\ref{miller-final}).
\end{definition}

The potential function
$V_0(\cdot,\lambda)$ 
is complex-valued for $\lambda\in\C\setminus\R$ and the turning points are in
general not real.  Assuming that we can extend the functions $A$ and $S$ on the
complex $x$-plane holomorphically, we may consider studying the asymptotics
of solutions of (\ref{miller-ev}) [eventually of (\ref{miller-final})] on some
contour (in the complex $x$-plane), other than the real $x$-axis, connecting
$-\infty$ to $+\infty$ and forcing that  contour to pass through at least one
pair of complex turning points. So, fix a $\lambda\in\C$.  With the above intuition
in mind,  we have the following definition. This is not quite the same as that of
\cite{mil}; it is posed in a form that makes the exact WKB method of the next
subsection directly applicable. But one can  see  that for a case like
$A(x)=S(x)=sech(2x)$ the two are equivalent.
\begin{definition}
\label{x-appropriate-admissible-curve}
Consider a pair $\{x_-(\lambda)$,  $x_+(\lambda)\}$ of complex roots of
$V_0(x,\lambda)=0$.  A contour $C=C^-\cup C^0\cup C^+$ in the complex
$x$-plane, where

\begin{itemize}
\item
the curve $C^-$ connects $-\infty$ to $x_-(\lambda)$
\item
the curve $C^0$ connects $x_-(\lambda)$ to $x_+(\lambda)$ and
\item
the curve $C^+$ connects $x_+(\lambda)$ to $+\infty$,
\end{itemize}
shall be called \textbf{admissible} if the following four conditions hold true:
\begin{itemize}
\item[(i)]
The function $V_0(x,\lambda)$  is holomorphic in $x$, for all $x$ in the
region of the complex $x$-plane enclosed by $C$ and the real $x$-axis
(this is needed to ensure that the approximate eigenfunctions can
be continued back to the real $x$-axis).
\item[(ii)]
For all $x\in C^-$
\begin{equation*}
\Re\bigg\{i
\int_{C^-(x\rightsquigarrow x_-(\lambda))}
\sqrt{-V_0(t,\lambda)}dt
\bigg\}
\end{equation*}
is decreasing as $x$ moves toward $-\infty$,
with the line integral being taken from $x$ to $x_-(\lambda)$ along $C^-$.
\item[(iii)]
For all $x\in C^0$
\begin{equation*}
\Re\bigg\{i
\int_{C^0(x_-(\lambda)\rightsquigarrow x)}
\sqrt{-V_0(t,\lambda)}dt
\bigg\}
=0
\end{equation*}
with the line integral being taken from $x_-(\lambda)$ to $x$ along $C^0$
(this item ensures that the turning points are connected by a path on
which both WKB eigenfunctions are bounded as $\epsilon\downarrow0$).
\item[(iv)]
For all $x\in C^+$
\begin{equation*}
\Re\bigg\{i
\int_{C^+(x_+(\lambda)\rightsquigarrow x)}
\sqrt{-V_0(t,\lambda)}dt
\bigg\}
\end{equation*}
is increasing as $x$ moves toward $+\infty$,
with the line integral being taken from $x_+(\lambda)$ to $x$ along $C^+$.
\end{itemize}
\end{definition}
In all the relations including the square root of $-V_0$ in the definition above, we
choose the branch of $\sqrt{-V_0(x,\lambda)}$ by selecting $C^0$ as the appropriate cut
and imposing that it is  $\sim \lambda$ for  large
$x>0$. $C^0$ is called a Stokes line. Item (iii) in the list above can be reinterpreted
as a differential equation for the path $C^0$ in the complex $x$-plane.
Indeed, if $x=u+iv$, where $u,v\in\mathbb{R}$, then a field of curves is defined by the
differential relation
\begin{equation}
\label{de-real-path}
\Re\bigg\{i\sqrt{-V_0(u+iv,\lambda)}(du+idv)\bigg\}=0.
\end{equation}

Suppose now that for a pair of complex turning points $\{x_-(\lambda),x_+(\lambda)\}$
we have the condition
\begin{equation}
\label{lambda-relation}
\Re\bigg\{i
\int_{C^0(x_-(\lambda)\rightsquigarrow x_+(\lambda))}
\sqrt{-V_0(t,\lambda)}dt
\bigg\}
=0
\end{equation}
and let  $\lambda$ vary. Given a pair of turning points that
are distinct throughout a domain in the complex $\lambda$-plane,
relation (\ref{lambda-relation}) itself determines a curve in the complex
$\lambda$-plane. If $\lambda$ is on one of these curves, the standard
WKB procedure can be expected to apply to determine whether $\lambda$ is
in fact an $o(\epsilon)$ distance away from an eigenvalue (of course, this is
subject to the Stokes line $C^0$ avoiding any singularities
and the existence of appropriate  paths $C^-$, $C^+$).

Therefore, as $\epsilon\downarrow0$, we expect that the discrete eigenvalues
of (\ref{dirac}) will accumulate on the union of curves in the complex
$\lambda$-plane described by formula (\ref{lambda-relation}),
with the union being taken over pairs of complex turning points.
Hence we are led to the following.
\begin{definition}
\label{asympt-spec-arc}
The curves in the complex $\lambda$-plane consisting of $\lambda$-points  that
give rise to  admissible
contours $C$ (on the $x$-plane) will be called \textbf{asymptotic spectral arcs}.
\end{definition}

It follows from this definition that these asymptotic spectral arcs are  unions
of analytic arcs; if there is only a finite number of turning points for each
$\lambda$ these are finite unions of analytic arcs. Applying WKB theory in
\cite{fujii+hatzi+kamvi} to the contour $C$ (admissible contour), we have
rigorously obtained the uniform eigenvalue condition
\begin{equation}
\label{bsqc}
\frac{1}{\pi\epsilon}
\Im\bigg\{i
\int_{C^0(x_-(\lambda)\rightsquigarrow x_+(\lambda))}
\sqrt{-V_0(t,\lambda)}dt
\bigg\}
-\frac{1}{2}
\in\mathbb{Z}
\end{equation}
which can be interpreted as a \textit{Bohr-Sommerfeld quantization rule},
for $\lambda$ on the appropriate asymptotic spectral arc corresponding to the
condition (\ref{lambda-relation}), for all possible admissible choices of
turning points $x_-(\lambda)$ and $x_+(\lambda)$.

\subsection{The exact WKB method}
\label{exact-wkb}
In this and next subsection \ref{exact-wkb-connection-1-tp}, we review the local
theory of the exact WKB method applied to our Zakharov-Shabat operator in a
fixed open set in $\C$.

Let us fix $\lambda$, let $\Omega$ be a simply connected subdomain of the
$x$-complex plane free from turning points, and take a fixed point
$\alpha\in\Omega$. We define a \textit{phase map}
\begin{align}
z(\cdot,\lambda,\alpha):\Omega
&\rightarrow
\mathbb{C}\quad\text{with}\nn\\
\label{phase}
z(x,\lambda,\alpha)=i\int_{\gamma(\alpha\rightsquigarrow x)}
&
\sqrt{-V_0(t,\lambda)}dt
\end{align}
where of course the path of integration is irrelevant and we choose the
branch of $\sqrt{-V_0(x,\lambda)}$ which makes it positive for large $x>0$.
Observe that $z$ is uniquely defined in $\Omega$ and it maps
$\Omega$ conformally to $z(\Omega,\lambda,\alpha)$.

We look for solutions of (\ref{miller-final}) having the form
\begin{equation}
\textbf{v}(x,\lambda,\epsilon,\alpha)=
\exp\bigg\{\pm\frac{z(x,\lambda,\alpha)}{\epsilon}\bigg\}
\tilde{\textbf{w}}^\pm[z(x,\lambda,\alpha)]
\end{equation}
for
$\tilde{\textbf{w}}^\pm=[\tilde{w}^{\pm}_1\hspace{3pt}\tilde{w}^{\pm}_2]^T$.
Substituting this in (\ref{miller-final}), we get the system
\begin{equation}\label{ansatz-1-sys}
\frac{\epsilon}{i}\frac{d}{dz}\tilde{\textbf{w}}^{\pm}(z)
=
\begin{bmatrix}
\pm i & H(z)^{-2}\\
-H(z)^{2} & \pm i
\end{bmatrix}
\tilde{\textbf{w}}^{\pm}(z)
\end{equation}
where [upon recalling (\ref{gigi})] the function $H$ is given by
\begin{equation*}
H(z)=\bigg[\frac{g_-(x,\lambda)}{g_+(x,\lambda)}\bigg]^{1/4}.
\end{equation*}
Finally,  we set
\begin{equation*}
P_{\pm}(z)=
\begin{bmatrix}
H(z)^{-1} & H(z)^{-1} \\
iH(z) & -iH(z)
\end{bmatrix}
\begin{bmatrix}
0 & 1\\
1 & 0
\end{bmatrix}^{\frac{1\pm1}{2}}
\end{equation*}
and apply the transformation
\be\nn
\tilde{\textbf{w}}^{\pm}(z)=P_{\pm}(z)\textbf{w}^{\pm}(z)
\ee
for $\textbf{w}^\pm=[w^{\pm}_{\rm even}\hspace{3pt}w^{\pm}_{\rm odd}]^T$.
Then the system in (\ref{ansatz-1-sys}) is written as
\begin{equation}\label{ansatz-final-sys}
\frac{\epsilon}{i}\frac{d}{dz}\textbf{w}^{\pm}(z)
=
\begin{bmatrix}
0 & \mathcal{H}(z)\\
\mathcal{H}(z) & \mp\frac{2}{\epsilon}
\end{bmatrix}
\textbf{w}^{\pm}(z)
\end{equation}
in which we consider $\mathcal{H}$ to be defined by
\begin{equation}
\mathcal{H}(z)=\frac{\dot{H}(z)}{H(z)}
\end{equation}
and where the dot denotes differentiation with respect to $z$. Easily, one can
arrive at
\begin{equation*}
\mathcal{H}(z)=\frac{d}{dz}\log[H(z)]=
\frac{1}{4}
\frac{A(x)S''(x)-2\lambda A'(x)-A'(x)S'(x)}
{\big\{[\lambda+\tfrac{1}{2}S'(x)]^2+A^{2}(x)\big\}^{3/2}}.
\end{equation*}

In the usual WKB theory,  the vector valued \textit{symbols} $\textbf{w}^{\pm}$
are constructed as a power series in the parameter $\epsilon$,  which is in general
divergent. The exact WKB method consists in the resummation of this divergent series
in the following way. By taking a point $x_0\in\Omega$, setting
\be\nn
z_0\equiv z_0(\lambda)=z(x_0,\lambda,\alpha)
\ee
and postulating the \textit{series ansatz}
\begin{equation}
\label{formal-series}
\textbf{w}^{\pm}=
\sum_{n=0}^{\infty}\textbf{w}^{\pm}_{n}=
\sum_{n=0}^{\infty}
\begin{bmatrix}
w^{\pm}_{2n}\\
w^{\pm}_{2n-1}
\end{bmatrix}\equiv
\begin{bmatrix}
w^{\pm}_{\rm even}\\
w^{\pm}_{\rm odd}
\end{bmatrix}
\end{equation}
we see from (\ref{ansatz-final-sys}) that the functions
$w^{\pm}_{n}, ~-1\leq n\in\mathbb{Z},$ can  be defined inductively by
\begin{equation*}
w_{-1}^\pm\equiv 0,\quad w_0^\pm\equiv 1
\end{equation*}
and for $1\leq n\in\mathbb{Z}$
\begin{equation}\label{recurrence}
\left\{
\begin{array}{rl}
\frac{d}{dz}w_{2n}^\pm(z) &= \mathcal{H}(z)w_{2n-1}^\pm(z) \\
(\frac{d}{dz}\pm\frac{2}{\epsilon})w_{2n-1}^\pm(z) &= \mathcal{H}(z)w_{2n-2}^\pm(z) \\
w_n^\pm(z_0) & =0.
\end{array}
\right.
\end{equation}

These recurrence equations uniquely determine (at least in a neighborhood
of $x_0$) the sequence of scalar functions $\{w_n^\pm(z,\epsilon,z_0)\}_{n=-1}^\infty$
and hence the sequence of vector-valued functions
$\{\textbf{w}_n^\pm(z,\epsilon,z_0)\}_{n=0}^\infty$.
We can write the relations (\ref{recurrence}) in an integral form if we introduce
the following integral operators $\mathcal{I}_\pm$ and $\mathcal{J}$ taking
functions from $\mathscr{H}(z(\Omega,\lambda,\alpha))$ to functions in
$\mathscr{H}(z(\Omega,\lambda,\alpha))$ by
\begin{equation}
\label{I}
\mathcal{I}_\pm[f](z)=
\int_{\Gamma} e^{\pm2(\zeta-z)/\epsilon}\mathcal{H}(\zeta)f(\zeta)d\zeta
\end{equation}
\begin{equation}
\label{J}
\mathcal{J}[f](z)=
\int_{\Gamma}\mathcal{H}(\zeta)f(\zeta)d\zeta
\end{equation}
where $\Gamma\equiv\Gamma(z_0\rightsquigarrow z)$ is the image by $z$
of a path $\gamma\equiv\gamma(x_0\rightsquigarrow x)$ in $\Omega$
connecting $x_0$, $x$; in other words $\Gamma=z(\gamma)$. 
Then (\ref{recurrence}) for $n\geq1$ becomes
\begin{equation}
\label{integral-recurrence}
\left\{
\begin{array}{rl}
w_{2n}^\pm &= \mathcal{J}[w_{2n-1}^\pm] \\
w_{2n-1}^\pm &= \mathcal{I}_{\pm}[w_{2n-2}^\pm].
\end{array}
\right.
\end{equation}

Going back to the initial system (\ref{miller-ev}),
we have constructed (formal) WKB solutions of the form
\begin{multline}
\label{wkb-sols}
\textbf{u}^{\pm}(x,\lambda,\epsilon,\alpha,x_0)=\\
\exp\Big\{\pm\frac{z(x,\lambda,\alpha)}{\epsilon}\Big\}
Q(x,\lambda,\epsilon)
\begin{bmatrix}
0 & 1\\
1 & 0
\end{bmatrix}^{\frac{1\pm1}{2}}
\textbf{w}^{\pm}[z(x,\lambda,\alpha),\epsilon,z_0]
\end{multline}
where $Q(\cdot,\lambda,\epsilon)$ is a matrix-valued function with value
\be\nn
Q(x,\lambda,\epsilon)=
\begin{bmatrix}
\exp\{-iS(x)/(2\epsilon)\} & 0\\
0 & \exp\big\{iS(x)/(2\epsilon)\}
\end{bmatrix}
\begin{bmatrix}
1 & 1\\
-1 & 1
\end{bmatrix}
\begin{bmatrix}
H(z)^{-1} & H(z)^{-1} \\
iH(z) & -iH(z)
\end{bmatrix}.
\ee
Here,  we added the superscript $\pm$ to the solution
$\textbf{u}=[u_1\hspace{3pt}u_2]^T$ -consequently arriving at the
notation $\textbf{u}^{\pm}=[u^{\pm}_1\hspace{3pt}u^{\pm}_2]$-
to distinguish between the two different cases  in the previous
discussion; also we  keep in mind that the WKB solutions
in (\ref{wkb-sols}) depend on a base point $\alpha$ for the phase
map (\ref{phase}) and a base point $x_0$ for the symbols
$\textbf{w}^{\pm}$.

By the definition of a progressive path,
\be\nn
\Omega_\pm=
\{\, x\in\Omega\mid
\exists
\hspace{3pt}(\pm)\text{-progressive path}\hspace{3pt}
\gamma(x_0\rightsquigarrow x)\subset\Omega\,\}.
\ee
We have  the following basic theorem; for the proof,  we refer to \cite{fln}.
\begin{theorem}
\label{formal-to-rigorous}
The exact WKB solutions in (\ref{wkb-sols}) satisfy the following properties.
\begin{enumerate}
\item
The formal series in (\ref{formal-series})  is absolutely convergent in a
neighborhood of $x_0$.\\
\item
For each $N\in\mathbb{N}$, we have
\begin{align*}
\mathbf{w}^{\pm}-\sum_{n=0}^{N-1}\mathbf{w}^{\pm}_{n}
& =
\mathcal{O}(\epsilon^N)
\quad\text{as}\quad\epsilon\downarrow0\\
w^\pm_{\rm even}-\sum_{n=0}^{N-1}w_{2n}^\pm
& =
\mathcal{O}(\epsilon^N)
\quad\text{as}\quad\epsilon\downarrow0\\
w^\pm_{\rm odd}-\sum_{n=0}^{N-1}w_{2n-1}^\pm
& =
\mathcal{O}(\epsilon^N)
\quad\text{as}\quad\epsilon\downarrow0
\end{align*}
uniformly in any compact subset of $\Omega_\pm$.  In particular,
there we have
\begin{align*}
w^\pm_{\rm even}
& =
1+\mathcal{O}(\epsilon)
\quad\text{as}\quad\epsilon\downarrow0\\
\quad w^\pm_{\rm odd}
& =
\mathcal{O}(\epsilon)
\quad\text{as}\quad\epsilon\downarrow0.
\end{align*}
\item
Any two exact WKB solutions with different base points
for the symbol satisfy
\begin{align}
\label{wronsky+-}
{\mathcal W}[{\bf u}^+(x,\lambda,\epsilon,\alpha,x_0),
{\bf u}^-(x,\lambda,\epsilon,\alpha,x_1)]
&=
4i w_{\rm even}^+(z_1,\epsilon,z_0)\\
\label{wronsky++}
{\mathcal W}[{\bf u}^+(x,\lambda,\epsilon,\alpha,x_0),
{\bf u}^+(x,\lambda,\epsilon,\alpha,x_1)]
&=
-4i e^{2z_1/\epsilon}w_{\rm odd}^+(z_1,\epsilon,z_0)
\end{align}
where $x_0$,  $x_1$ are two different points in $\Omega$ and
$z_j=z(x_j,\lambda,\alpha)$ for $j=0,1$.
\end{enumerate}
\end{theorem}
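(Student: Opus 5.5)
The plan is to work directly with the integral form (\ref{integral-recurrence}) of the recurrence and to treat the three parts in order: convergence and asymptotics first, then the Wronskians.

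\textbf{Parts (1) and (2): convergence and asymptotics.}

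First I fix $x$ and a progressive path $\gamma(x_0\rightsquigarrow x)\subset\Omega$. Here ``$(\pm)$-progressive'' means that $\pm\Re z$ is nondecreasing along $\Gamma=z(\gamma)$, as in \cite{fln}. On such a path the kernel of $\mathcal{I}_\pm$ satisfies
$$|e^{\pm2(\zeta-z)/\epsilon}|\le 1,$$
so $|\mathcal{I}_\pm f(z)|\le \int_\Gamma|\mathcal{H}||f||d\zeta|$. The same bound holds trivially for $\mathcal{J}$.

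Set $\Phi(z)=\int_\Gamma|\mathcal{H}(\zeta)||d\zeta|$, which is finite on compacts because $\mathcal{H}$ is holomorphic away from turning points. Induction on $n$ gives the Picard-type bound
$$|w_n^\pm(z)|\le \frac{\Phi(z)^n}{n!}.$$
Hence the series (\ref{formal-series}) converges absolutely and uniformly on compact subsets of $\Omega_\pm$, and in particular near $x_0$, where every short segment is progressive for both signs. This gives (1), and termwise differentiation shows the sums solve (\ref{ansatz-final-sys}).

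For (2) the crude bound above is not enough; I need an extra factor of $\epsilon$ each time $\mathcal{I}_\pm$ is applied. I would integrate by parts in $\mathcal{I}_\pm$:
$$\mathcal{I}_\pm[f](z)=\mp\frac{\epsilon}{2}\Big(\mathcal{H}f(z)-e^{\pm2(z_0-z)/\epsilon}\mathcal{H}f(z_0)\Big)\pm\frac{\epsilon}{2}\int_\Gamma e^{\pm2(\zeta-z)/\epsilon}(\mathcal{H}f)'\,d\zeta .$$
This needs control of derivatives, which I get from Cauchy estimates on a slightly shrunken compact set. It yields
$$|w_{2n-1}^\pm|,\;|w_{2n}^\pm|\le C^n\epsilon^n\frac{\Phi^{n-1}}{(n-1)!},$$
and summing the tail gives the $\mathcal{O}(\epsilon^N)$ remainders.

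\textbf{Part (3): the Wronskians.}

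The Wronskian of two solutions of the trace-free system (\ref{miller-ev}) is independent of $x$. The plan is therefore to evaluate it at a convenient point.

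For (\ref{wronsky+-}) I evaluate at $x=x_1$. There $\mathbf{w}^-(z_1,\epsilon,z_1)=(1,0)^T$ because all higher terms vanish at their base point. The exponential factors $e^{\pm z/\epsilon}$ cancel. The determinant of $Q$ is computed directly:
$$\det Q=\det\begin{bmatrix}1&1\\-1&1\end{bmatrix}\cdot\det\begin{bmatrix}H^{-1}&H^{-1}\\ iH&-iH\end{bmatrix}=2\cdot(-2i).$$
Together with the permutation matrix this gives $\pm4i$ times the pairing of $(0,1)$ with the swapped $\mathbf{w}^+$, i.e.\ $4i\,w^+_{\rm even}(z_1,\epsilon,z_0)$. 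The one thing to do carefully here is fix the sign convention for $\mathcal{W}$.

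For (\ref{wronsky++}) I evaluate at $x_1$ in the same way. Now both solutions carry $e^{z_1/\epsilon}$, producing $e^{2z_1/\epsilon}$. The second factor is $(0,1)^T$ after the swap, so the determinant picks out the odd component, giving $-4i\,e^{2z_1/\epsilon}w^+_{\rm odd}(z_1,\epsilon,z_0)$.

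\textbf{Main obstacle.}

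I expect the hardest step to be the uniform $\mathcal{O}(\epsilon^N)$ estimate in (2), specifically making the integration by parts uniform. Paths must remain progressive while Cauchy estimates are applied on nearby paths. I would handle this by choosing, for each compact $K\subset\Omega_\pm$, a tubular neighbourhood of progressive paths inside $\Omega_\pm$; this is exactly the setup of \cite{fln}, which I would follow.
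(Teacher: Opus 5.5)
The paper gives no proof of this theorem; it refers to \cite{fln}. Your outline follows that standard route: Picard bounds for convergence, integration by parts to extract a factor $\epsilon$ from $\mathcal{I}_\pm$, and evaluation of the $x$-independent Wronskian at $x=x_1$, where $\mathbf{w}^\pm(z_1,\epsilon,z_1)=(1,0)^T$. Part (3) is correct: with $\det Q=2\cdot(-2i)=-4i$ you get exactly $4i\,w^+_{\rm even}(z_1,\epsilon,z_0)$ and $-4i\,e^{2z_1/\epsilon}w^+_{\rm odd}(z_1,\epsilon,z_0)$, once the sign bookkeeping you flag is done.

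The genuine gap is the derivative control in part (2). The integral left after your integration by parts runs along $\Gamma$ starting at $z_0$, so you need bounds on $(\mathcal{H}w^\pm_{2n-2})'$ near $z_0$. Cauchy estimates cannot give these uniformly in $\epsilon$. No neighbourhood of $x_0$ is contained in $\Omega_\pm$, and just behind the base point the symbols are exponentially large. For instance, for small real $\rho>0$ and $\mathcal{H}(z_0)\neq0$,
\[
w_1^+(z_0-\rho)=-\tfrac{\epsilon}{2}\,\mathcal{H}(z_0)\,e^{2\rho/\epsilon}\,(1+\mathcal{O}(\epsilon)).
\]
So the ``tubular neighbourhood of progressive paths inside $\Omega_\pm$'' you plan to use does not exist at the start of every path. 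Shrinking the Cauchy discs to radius $\sim\epsilon$ there costs a factor $\epsilon^{-1}$, and $\log(1/\epsilon)$ losses after integration.

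The missing observation is that no derivative of the previous symbol is needed if you integrate by parts in the composite operator $\mathcal{I}_\pm\mathcal{J}$. Since $\frac{d}{dz}\mathcal{J}[f]=\mathcal{H}f$ and $\mathcal{J}[f](z_0)=0$,
\[
\mathcal{I}_\pm\mathcal{J}[f](z)=\pm\frac{\epsilon}{2}\,\mathcal{H}(z)\,\mathcal{J}[f](z)\mp\frac{\epsilon}{2}\int_\Gamma e^{\pm2(\zeta-z)/\epsilon}\Big(\frac{d\mathcal{H}}{d\zeta}\,\mathcal{J}[f]+\mathcal{H}^2f\Big)(\zeta)\,d\zeta .
\]
On $(\pm)$-progressive paths of bounded length inside a fixed compact subset of $\Omega$, this gives $\sup_\Gamma|\mathcal{I}_\pm\mathcal{J}[f]|\le C\epsilon\sup_\Gamma|f|$. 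You also have $w_1^\pm=\mathcal{I}_\pm[1]=\mathcal{O}(\epsilon)$, because its boundary term $e^{\pm2(z_0-z)/\epsilon}\mathcal{H}(z_0)$ is bounded on progressive paths. Hence $w^\pm_{2n-1}=(\mathcal{I}_\pm\mathcal{J})^{n-1}w_1^\pm$ and $w^\pm_{2n}=\mathcal{J}[w^\pm_{2n-1}]$ are both $\mathcal{O}((C\epsilon)^n)$, which sums to the $\mathcal{O}(\epsilon^N)$ remainders. The signs in your integration-by-parts formula are also reversed, though this is harmless.

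There is a smaller slip in part (1). A short segment issuing from $z_0$ is $(\pm)$-progressive for at most one sign, not both. So your progressive-path bound does not give convergence on a full neighbourhood of $x_0$. For fixed $\epsilon$, use instead the crude bound $|e^{\pm2(\zeta-z)/\epsilon}|\le e^{2D/\epsilon}$, with $D$ bounding $|\Re(\zeta-z)|$ along arbitrary paths of bounded length. This gives $|w_n^\pm|\le(e^{2D/\epsilon}\Phi)^n/n!$ on every compact subset of $\Omega$. It is also what makes $w^+_{\rm even}(z_1,\epsilon,z_0)$ and $w^+_{\rm odd}(z_1,\epsilon,z_0)$ in part (3) meaningful for arbitrary $x_1\in\Omega$, not only $x_1\in\Omega_+$.
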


\subsection{Connection around a simple turning point}
\label{exact-wkb-connection-1-tp}

We continue in this section by presenting a local connection formula for exact WKB
solutions near a simple turning point. Let $\alpha$ be a simple turning point, i.e.
\begin{align*}
\det M(\alpha)&=
g_+(\alpha)g_-(\alpha)=0\\
(\det M)'(\alpha)&=
g_+(\alpha)g'_-(\alpha)+g'_+(\alpha)g_-(\alpha)\neq 0
\end{align*}
which means that $\alpha$ is a simple zero of one and only one of either $g_+(x)$ or
$g_-(x)$. Three Stokes lines $l_0$, $l_1$ and $l_2$, numbered in the anti-clockwise
sense, emanate from $\alpha$ and  devide a neighborhood $\omega$ of $\alpha$ into
three regions $\omega_0$ bounded by $l_1$ and $l_2$,  $\omega_1$ bounded by $l_2$
and $l_0$ and $\omega_2$ bounded by $l_0$ and $l_1$. In each of these regions,
we are going to define exact WKB solutions ${\bf u}_0$, ${\bf u}_1$ and ${\bf u}_2$
respectively in such a way that its asymptotic behavior is known near $\alpha$ by
Theorem \ref{formal-to-rigorous}.

We have first to specify the branch of the multi-valued function
$z(x,\lambda,\alpha)$ defined by \eqref{phase}.
Let us take a base point $x_j$  in $\omega_j$ $(j=0,1,2)$.
We put a branch cut on the Stokes line $l_1$, and take the branch of the function
$\sqrt{\det M}$ so that
$\re \,z(x,\lambda,\alpha)$ increases from $x_0$ towards the turning point $\alpha$.
Then consequently $\re \,z(x,\lambda,\alpha)$ decreases from $x_1$ towards  $\alpha$
and increases from $x_2$ towards  $\alpha$.

We define three exact WKB solutions:
\begin{align}
{\bf u}_0(x,\epsilon)={\bf u}^+(x,\lambda,\epsilon,\alpha,x_0), \\
{\bf u}_1(x,\epsilon)={\bf u}^-(x,\lambda,\epsilon,\alpha,x_1), \\
{\bf u}_2(x,\epsilon)={\bf u}^+(x,\lambda,\epsilon,\alpha,x_2).
\end{align}
These solutions are constructed near $x_0$, $x_1$, $x_2$ respectively but can be
extended analytically to $\omega$. They cannot be linearly independent since
the vector space of the solutions is of dimension two. More precisely, we have the
following linear relation.
\begin{proposition}
\label{dep}
Let ${\bf u}_0$, ${\bf u}_1$, ${\bf u}_2$ be three solutions to the differential
equation \eqref{ev-problem} in a region $\omega$. Then the following identity holds:
\be
\label{Widentity}
{\mathcal W}[{\bf u}_1,{\bf u}_2]{\bf u}_0+
{\mathcal W}[{\bf u}_2,{\bf u}_0]{\bf u}_1+
{\mathcal W}[{\bf u}_0,{\bf u}_1]{\bf u}_2=0.
\ee
\end{proposition}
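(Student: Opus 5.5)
The identity \eqref{Widentity} is pure linear algebra in $\C^2$, combined with the fact that the Wronskian of two solutions of \eqref{ev-problem} is independent of $x$. Here I take ${\mathcal W}[{\bf f},{\bf g}]=\det[{\bf f}\ {\bf g}]=f_1g_2-f_2g_1$.

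\emph{Step 1: constancy of the Wronskian.} Write \eqref{ev-problem} in the form \eqref{miller-ev}, namely ${\bf u}'=\frac{i}{\epsilon}K{\bf u}$. For two solutions ${\bf f},{\bf g}$, Liouville's formula gives
\[
\frac{d}{dx}{\mathcal W}[{\bf f},{\bf g}]=\frac{i}{\epsilon}\,\mathrm{tr}K\,{\mathcal W}[{\bf f},{\bf g}]=0 ,
\]
since $\mathrm{tr}K=-\lambda+\lambda=0$. Hence each Wronskian appearing in \eqref{Widentity} is a constant in $\omega$. This holds equally for the holomorphic extensions of the solutions to the complex domain $\omega$, because the same differential equation holds there.

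\emph{Step 2: a pointwise identity.} Fix $x\in\omega$ and set ${\bf a}={\bf u}_0(x)$, ${\bf b}={\bf u}_1(x)$, ${\bf c}={\bf u}_2(x)$, all in $\C^2$. I claim that for any three vectors in $\C^2$,
\[
\det[{\bf b}\ {\bf c}]\,{\bf a}+\det[{\bf c}\ {\bf a}]\,{\bf b}+\det[{\bf a}\ {\bf b}]\,{\bf c}=0 .
\]
There are two ways to see this. One is by direct expansion in components: the first component is $(b_1c_2-b_2c_1)a_1+(c_1a_2-c_2a_1)b_1+(a_1b_2-a_2b_1)c_1$, and all terms cancel; the second component cancels in the same way. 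The other is to observe that the first component equals the $3\times3$ determinant with rows $(a_1,b_1,c_1)$, $(a_1,b_1,c_1)$, $(a_2,b_2,c_2)$, expanded along its first row. That determinant vanishes because it has a repeated row, and the second component is handled identically.

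\emph{Step 3: conclusion.} Evaluating the pointwise identity at each $x\in\omega$ gives \eqref{Widentity} as an identity of vector-valued functions. By Step 1 the coefficients are constants, so \eqref{Widentity} expresses a genuine linear dependence among the three solutions, with constant coefficients.

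No step poses a real obstacle. The only points needing care are fixing the Wronskian convention consistently with Theorem \ref{formal-to-rigorous}, and noting that the holomorphic extensions to $\omega$ still solve the same equation, so that the Wronskians stay constant there.
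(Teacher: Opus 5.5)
Your proof is correct, but it establishes the identity by a different route from the paper. The paper splits into cases. If all three Wronskians vanish there is nothing to prove. Otherwise, say ${\mathcal W}[{\bf u}_1,{\bf u}_2]\neq0$; the paper takes the Wronskian of the left-hand side of \eqref{Widentity} with ${\bf u}_1$ and with ${\bf u}_2$, finds both zero by bilinearity and antisymmetry, and concludes because $\{{\bf u}_1,{\bf u}_2\}$ is a basis.

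You instead verify the identity directly, by componentwise expansion or by the repeated-row $3\times3$ determinant. This needs no case split and no nondegeneracy, and it shows that \eqref{Widentity} holds pointwise for any three $\C^2$-valued functions. Two consequences follow:
\begin{itemize}
\item Your Step 1 is not needed for the identity itself. It only matters for reading \eqref{Widentity} as a linear relation with constant coefficients among solutions, which is how it is used afterwards.
\item Your concern about the Wronskian convention is moot. Any antisymmetric bilinear form on $\C^2$ is a scalar multiple of $\det$, and \eqref{Widentity} is homogeneous of degree one in ${\mathcal W}$.
\end{itemize}

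The paper's argument, in turn, is the one that generalizes to reading off connection coefficients, i.e.\ testing a solution against a basis via Wronskians. That mechanism drives the subsequent derivation of the quantization condition in Theorem~\ref{BS}.
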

\begin{proof}
If the three determinants ${\mathcal W}[{\bf u}_1,{\bf u}_2]$,
${\mathcal W}[{\bf u}_2,{\bf u}_0]$ and ${\mathcal W}[{\bf u}_0,{\bf u}_1]$ are all 0,
the identity holds obviously. Suppose that at least one of those does not vanish,
say ${\mathcal W}[{\bf u}_1,{\bf u}_2]\ne 0$. Then by taking the determinant of the LHS
of \eqref{Widentity} with ${\bf u}_1$ we find
$$
{\mathcal W}[{\bf u}_1,{\bf u}_2]{\mathcal W}[{\bf u}_0,{\bf u}_1]+
{\mathcal W}[{\bf u}_0,{\bf u}_1]{\mathcal W}[{\bf u}_2,{\bf u}_1]
$$
which is obviously zero. In the same way, we obtain that the determinant of the
LHS of \eqref{Widentity} with ${\bf u}_2$ is zero. Since the pair of solutions
$\{{\bf u}_1, {\bf u}_2\}$ makes a basis of the solution space, this means that
the LHS of \eqref{Widentity} should be zero.
\end{proof}

Let us come back to our exact WKB solutions ${\bf u}_0$, ${\bf u}_1$ and ${\bf u}_2$
defined above and compute the asymptotic behavior of the determinants appearing in
this proposition.

\begin{proposition}
\label{3.567}
As $\epsilon\downarrow0$, the following asymptotics hold true
\begin{align}
\label{01}
&{\mathcal W}[{\bf u}_0,{\bf u}_1]=2i(1+\CO(\epsilon)),
\\
\label{12}
&{\mathcal W}[{\bf u}_1,{\bf u}_2]=-2i(1+\CO (\epsilon)),
\\
\label{20}
&{\mathcal W}[{\bf u}_2,{\bf u}_0]=\mp 2(1+\CO(\epsilon)),
\end{align}
where in (\ref{20}) the upper sign (minus) is to be chosen in the case of $\alpha$ being
a zero of  $g_+$ and the lower sign (plus) in the case that $\alpha$ is a zero of $g_-$. In
particular, any two of the solutions ${\bf u}_0$, ${\bf u}_1$ and ${\bf u}_2$ are linearly
independent for sufficiently small $\epsilon$.
\end{proposition}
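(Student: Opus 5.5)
The plan is to reduce each of the three Wronskians to the formulas \eqref{wronsky+-}, \eqref{wronsky++} of Theorem \ref{formal-to-rigorous}, and then to read off their leading behaviour from part (2) of that theorem. For this we need a progressive path joining the two base points. The phase $z(x,\lambda,\alpha)$ is normalised at the turning point, so $z(\alpha)=0$. The base points $x_j$ may be taken in a small neighbourhood $\omega$ of $\alpha$, so all the values $z_j=z(x_j,\lambda,\alpha)$ are bounded. Hence any exponential prefactor $e^{\pm 2z_j/\epsilon}$ must either cancel or be controlled by the sign of $\re z_j$. The overall constants ($2i$, $-2i$, $\mp2$ rather than the $4i$ of \eqref{wronsky+-}) come from the leading term of $\det Q$, that is from the determinant of the product of the three constant/diagonal matrices in $Q$ evaluated at $\mathbf w=(1,0)^T$. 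I would recompute this normalisation once and for all at the start.

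\emph{Step 1: $\mathcal W[\mathbf u_0,\mathbf u_1]$.} Here $\mathbf u_0=\mathbf u^+(\cdot,x_0)$ and $\mathbf u_1=\mathbf u^-(\cdot,x_1)$, so \eqref{wronsky+-} gives a constant multiple of $w^+_{\rm even}(z_1,\epsilon,z_0)$. By the choice of branch, $\re z$ increases from $x_0$ towards $\alpha$ and decreases from $x_1$ towards $\alpha$. Thus the path $x_0\to(\text{near }\alpha)\to x_1$, which skirts $\alpha$ inside $\omega_0\cup\omega_1$ and does not cross the cut $l_1$, is $(+)$-progressive, with $\re z$ increasing along it. Therefore $x_1\in\Omega_+$ relative to $x_0$, and Theorem \ref{formal-to-rigorous}(2) gives $w^+_{\rm even}=1+\CO(\epsilon)$. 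This yields \eqref{01}.

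\emph{Step 2: $\mathcal W[\mathbf u_1,\mathbf u_2]$.} By antisymmetry, $\mathcal W[\mathbf u_1,\mathbf u_2]=-\mathcal W[\mathbf u^+(\cdot,x_2),\mathbf u^-(\cdot,x_1)]$. The path from $x_2$ to $x_1$ crosses $l_0$ and stays off the cut $l_1$. Along it $\re z$ increases from $x_2$ to $\alpha$ and then continues to increase towards $x_1$, so the path is again $(+)$-progressive. The same argument as in Step 1 gives $-2i(1+\CO(\epsilon))$.

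\emph{Step 3: $\mathcal W[\mathbf u_2,\mathbf u_0]$.} This is the main obstacle. Both solutions are of $+$ type, and $\omega_2$ and $\omega_0$ are separated by the cut $l_1$. Consequently \eqref{wronsky++} cannot be applied naively, since it would give an $\CO(\epsilon)$ quantity times an exponential. My plan is to continue $\mathbf u_2$ analytically across $l_1$ into $\omega_0$. Across the cut, $\sqrt{-V_0}$ changes sign, so $z\mapsto -z$ and the continued $\mathbf u_2$ becomes an exact WKB solution of $-$ type with respect to the branch used in $\omega_0$. At the same time $H=(g_-/g_+)^{1/4}$ acquires the factor $e^{\pm i\pi/2}$ from the quarter-power monodromy around a simple zero. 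The sign of this factor depends on whether $\alpha$ is a zero of $g_+$ (giving a factor $i^{-1}$) or of $g_-$ (giving a factor $i$). Combined with the swap matrix $\begin{bmatrix}0&1\\1&0\end{bmatrix}$ in $P_\pm$, this exchanges the roles of $\mathbf w^+$ and $\mathbf w^-$. After this identification, $\mathcal W[\mathbf u_2,\mathbf u_0]$ becomes a $(+,-)$ Wronskian, and \eqref{wronsky+-} applies along a path from $x_0$ to the continued $x_2$ that winds once around $\alpha$. This path is progressive because $\re z$ increases from $x_0$ to $\alpha$ and, with the flipped branch, increases again out to $x_2$. We therefore obtain $\pm i$ times $2i(1+\CO(\epsilon))$, i.e.\ $\mp2(1+\CO(\epsilon))$. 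The delicate point is the careful bookkeeping of the branch of $H$ and of the resulting factor $i^{\pm1}$. I would check it by computing $H$ locally: $H^4\sim c(x-\alpha)^{\pm1}$, with exponent $+1$ when $\alpha$ is a zero of $g_-$ and $-1$ when it is a zero of $g_+$.

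Finally, once all three Wronskians are nonzero for small $\epsilon$, the pairwise linear independence follows immediately.
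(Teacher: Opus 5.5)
Your route is the paper's. You obtain \eqref{01} and \eqref{12} from \eqref{wronsky+-} along $(+)$-progressive paths $x_0\to x_1$ and $x_2\to x_1$. You obtain \eqref{20} by rewriting $\mathbf u_2$ on the sheet reached from $x_0$ across the cut $l_1$, then applying \eqref{wronsky+-} along a progressive path from $x_0$ to $\hat x_2$. That path simply crosses $l_1$; it does not wind around $\alpha$. Your normalisation remark is correct: with $Q$ as written, $\det Q=-4i$, which is where the $4i$ in \eqref{wronsky+-} comes from. So the constants $\pm 2i$ in \eqref{01}--\eqref{12} presuppose a rescaled $Q$. This is harmless, since only ratios of these Wronskians enter $m(\epsilon)$.

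The gap is in Step~3, exactly in the bookkeeping you postpone, and it has two parts.

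First, the claim that the continued $\mathbf u_2$ carries the symbol $\mathbf w^-(\hat z,\epsilon,\hat z_2)$ does not come from the swap matrix; it comes from the transport equations. Across the cut, $H$ changes only by a constant factor, so $\mathcal H(z)=-\hat{\mathcal H}(\hat z)$ while $d/dz=-d/d\hat z$. Hence \eqref{recurrence} for $w_n^\pm$ becomes the system for $w_n^\mp$ in $\hat z$, with the same conditions $w_n(\hat z_2)=0$. Uniqueness then gives $\mathbf w^+(z,\epsilon,z_2)=\mathbf w^-(\hat z,\epsilon,\hat z_2)$; this is what the paper verifies.

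Second, the swap matrix only turns the factor in $H$ into a scalar. With $P(h)=\begin{bmatrix}h^{-1}&h^{-1}\\ ih&-ih\end{bmatrix}$ and $\sigma_1=\begin{bmatrix}0&1\\1&0\end{bmatrix}$, one checks $P(\kappa h)\sigma_1=\kappa^{-1}P(h)$ for $\kappa=\pm i$. So if $H=\kappa\hat H$, with $\kappa=i^{-1}$ at a zero of $g_+$ and $\kappa=i$ at a zero of $g_-$ as you say, then $\mathbf u_2=\kappa^{-1}\mathbf u^-(\cdot,\lambda,\epsilon,\alpha,\hat x_2)=\pm i\,\mathbf u^-(\cdot,\lambda,\epsilon,\alpha,\hat x_2)$. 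Therefore
\[
\mathcal W[\mathbf u_2,\mathbf u_0]=-\kappa^{-1}\,\mathcal W[\mathbf u^+(\cdot,\lambda,\epsilon,\alpha,x_0),\mathbf u^-(\cdot,\lambda,\epsilon,\alpha,\hat x_2)]=\mp i\cdot 2i\,(1+\CO(\epsilon))=\pm 2\,(1+\CO(\epsilon)).
\]
Your ``$\pm i$ times $2i$'' thus does not follow from the factor you identify; done carefully, it gives the opposite sign.

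The paper reaches $\mp2$ only because its proof writes $\mathbf u_2=\mp i\,\mathbf u^-(\cdot,\hat x_2)$, using $\kappa$ where $\kappa^{-1}$ is needed. An independent dominant-balance check confirms this. Apply \eqref{Widentity} in $\omega_0$, where $\mathbf u_0$ is recessive and $\mathbf u_1$, $\kappa\,\mathbf u_2$ both behave like $e^{-z/\epsilon}Q\,(1,0)^T$. This gives $\mathcal W[\mathbf u_2,\mathbf u_0]=-\kappa^{-1}\mathcal W[\mathbf u_0,\mathbf u_1]+\CO(\epsilon)=\pm2+\CO(\epsilon)$.

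So with the conventions as literally written (anticlockwise numbering, cut on $l_1$, $H=(g_-/g_+)^{1/4}$), you can prove \eqref{20} only with the sign reversed, unless a convention is changed. The modulus is unaffected, and hence so are the non-vanishing and the pairwise independence. A sign reversal common to both turning points also leaves $m(\epsilon)$ in Theorem~\ref{BS} unchanged.
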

\begin{proof}
Using Theorem \ref{formal-to-rigorous} (iii), we find
Using Theorem \ref{formal-to-rigorous} (iii), we find
$$
{\mathcal W}[{\bf u}_1,{\bf u}_2]=-2iw_{\rm even}^+(z_1,\epsilon,z_2)
$$
where $z_j=z(x_j,\lambda,\alpha)$.
Since there exists a $(+)$-progressive path from $x_2$ to $x_1$, one has
$w_{\rm even}^+(z_1,\epsilon,z_2)=1+\CO (\epsilon)$ by Theorem \ref{formal-to-rigorous} (ii),
and hence we obtain \eqref{12}. Similarly we have
$$
{\mathcal W}[{\bf u}_0,{\bf u}_1]=2iw_{\rm even}^+(z_1,\epsilon,z_0)=2i(1+\CO(\epsilon)).
$$

For the computation of the determinant  ${\mathcal W}[{\bf u}_2,{\bf u}_0]$, we have to be
careful since there is a branch cut between $x_2$ and $x_0$. Before applying the
\say{wronskian formula}, we rewrite ${\bf u}_2$ on the Riemann sheet continued from $x_0$
passing across the branch. Let $x$ be a point near $x_2$ and $\hat x$ the same point as $x$
but on that Riemann sheet. Then we have
\begin{align*}
&z(x,\lambda,\alpha)=-z(\hat x,\lambda,\alpha),\\
&H(x)=\mp iH(\hat x),\,\text{ if }\alpha \text{ is a zero of }g_\pm\quad\text{and}\\
&w^\pm (z,\epsilon,z_2)=w^\mp (z,\epsilon,z_2).
\end{align*}
Observe that $\arg (x-\alpha)=\arg(\hat x-\alpha)+2\pi$ and hence the first identity is
obvious since the integrand of $z$ contains a multi-valued function of type
$\sqrt{x-\alpha}$. The second assertion holds due to the fact that $H$ is the product
of $g_+^{-1/4}$ and $g_-^{1/4}$. To verify the third identity, let $w_n^\pm(z)$, $n\ge 1$,
be a family of solutions to the transport equations \eqref{recurrence} satisfying the
initial condition $w_n^\pm (z_2)=0$ at $z_2=z(x_2)$, and set $f_n^\pm(\hat z)=w_n^\pm(z)$,
$n\ge 1$. Inspecting \eqref{recurrence} and using $d/dz=-d/d\hat z$, it is easy to check
that
\begin{align*}
\left (\frac d{d\hat z}\mp\frac 2\epsilon\right )f_{2n+1}^\pm(\hat z)=
\frac{dH(\hat z)/d\hat z}{H(\hat z)}f_{2n}^\pm (\hat z), \\
\frac d{d\hat z}f_{2n+2}^\pm(\hat z)=
\frac{dH(\hat z)/d\hat z}{H(\hat z)}f_{2n+1}^\pm (\hat z)
\end{align*}
with $f_n^\pm (\hat z_2)=0$ at $\hat z_2=z(\hat x_2)$. By uniqueness of solutions, it follows
that $w_n^\pm(z)=w_n^\mp(\hat z)$ with $w_n^\mp(\hat z_2)=0$, which, in view of
\eqref{formal-series}, implies that
$$
{\bf w}^\pm (z,\epsilon,z_2)=
\sum_{n=0}^\infty\left (
\begin{array}{c}
w_{2n}^\pm(z,z_2) \\
w_{2n+1}^\pm(z,z_2)
\end{array}
\right )
=\sum_{n=0}^\infty\left (
\begin{array}{c}
w_{2n}^\mp(\hat z,\hat z_2) \\
w_{2n+1}^\mp(\hat z,\hat z_2)
\end{array}
\right )
={\bf w}^\mp(\hat z,\epsilon,\hat z_2).
$$
Thus, with this point $\hat x_2$ as base point of the symbol, the solution
${\bf u}_2(x,\epsilon)$ is written as
$$
{\bf u}_2(x,\epsilon)=\mp i {\bf u}^-(x,\lambda,\epsilon,\alpha, \hat x_2),\,
\text{ if }\alpha \text{ is a zero of }g_\pm.
$$

We now apply Theorem \ref{formal-to-rigorous} (iii) to compute the desired determinant,
using the above expression. Since there exists a progressive curve from $x_0$ to
$\hat x_2$, we have
$$
{\mathcal W}[{\bf u}_0,{\bf u}_2]=\pm i(\det Q)w_{\rm even}^+(\hat z_2,\epsilon,z_0)=
\pm 2(1+\CO(\epsilon))
$$
when $\alpha$ is a zero of $g_\pm$.
\end{proof}

\subsection{Eigenvalues near a generic point on the asymptotic spectral arc}
\label{bs-at-two-turn-pts}

In this section, we return to the eigenvalue problem \eqref{ev-problem} for the
Zakharov-Shabat operator $\mathfrak{D}_\epsilon$.
We derive the quantization condition for the eigenvalues in a small
neighborhood of a fixed $\lambda_0$, independent of $\epsilon$,
belonging to an asymptotic spectral arc under the following generic condition.

\vspace{0.5cm}

\begin{minipage}{0.1\textwidth}
{\bf (H1):}
\end{minipage}
\hspace{0.01\textwidth}
\begin{minipage}{0.8\textwidth}
$C^0$ contains no other turning point than $x_-(\lambda)$ and $x_+(\lambda)$.
\end{minipage}

\vspace{0.5cm}

Let $C(\lambda_0)$ be the admissible contour
$C(\lambda_0)=C^-(\lambda_0)\cup C^0(\lambda_0)\cup C^+(\lambda_0)$
corresponding to $\lambda_0$ and denote by
$\alpha_0=\alpha(\lambda_0)=x_-(\lambda_0)$,
$\beta_0=\beta(\lambda_0)=x_+(\lambda_0)$
the simple turning points at the extremities of $C^0(\lambda_0)$.  For $\lambda$ in
a small enough neighborhood of $\lambda_0$, the simple turning points
$\alpha=\alpha(\lambda)=x_-(\lambda)$ and $\beta=\beta(\lambda)=x_+(\lambda)$
are still defined and analytic.
From point $\alpha$ there are  three Stokes lines emanating,  namely
$l_0^{\alpha}$, $l_1^{\alpha}$, $l_2^{\alpha}$ (in anti-clockwise order); similarly from
$\beta$ there emerge three Stokes lines which we denote by
$l_0^\beta$, $l_1^\beta$, $l_2^\beta$ (in clockwise order).  For the configuration we refer
to Figure 1.  Suppose $\lambda=\lambda_0$ and that $l_0^{\alpha}$ and
$l_0^{\beta}$ coincide with $C^0(\lambda)$. The five Stokes lines
$C^0(\lambda)$,  $l_1^{\alpha}$, $l_2^{\alpha}$,  $l_1^{\beta}$ and
$l_2^{\beta}$ divide a neighborhood $\omega$ of the admissible contour into
four regions which we denote as follows
\begin{itemize}
\item
$\omega_{\rm left}^{\alpha}$ is bounded by $l_1^{\alpha}$,  $l_2^{\alpha}$
\item
$\omega_{\rm right}^{\beta}$ is bounded by $l_1^{\beta}$,  $l_2^{\beta}$
\item
$\omega_{\rm up}^{\alpha,\beta}$ is bounded by $C^0(\lambda)$,
$l_1^{\alpha}$,  $l_1^{\beta}$ and finally
\item
$\omega_{\rm down}^{\alpha,\beta}$ is bounded by $C^0(\lambda)$,
$l_2^{\alpha}$,  $l_2^{\beta}$.
\end{itemize}
Let us take a point in each of these regions: $x_0^{\alpha}$ in the region
$\omega_{\rm left}^{\alpha}$,  $x_0^{\beta}$ in the region $\omega_{\rm right}^{\beta}$,
$x_1$ in the region $\omega_{\rm down}^{\alpha,\beta}$ and $x_2$ in the
region $\omega_{\rm up}^{\alpha,\beta}$.

\begin{figure}[htbp]
\begin{center}
\includegraphics[width=100mm]{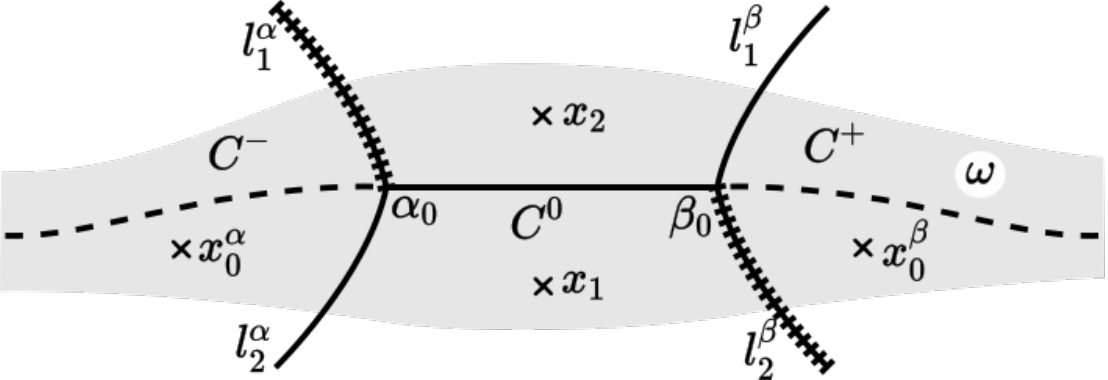}
\caption{Stokes lines near a simple turning point.}
\label{stokes1}
\end{center}
\end{figure}

As in \S \ref{exact-wkb-connection-1-tp}, we have to consider branch cuts; we choose
one on the Stokes line $l_1^{\alpha}$ and one on $l_2^{\beta}$. Assuming that $\re z(x)$
increases towards $\alpha$ in $\omega_{\rm left}^{\alpha}$,  we define exact WKB
solutions as follows.
\begin{align*}
{\bf u}_0^\alpha(x,\epsilon)&={\bf u}^+(x,\lambda,\epsilon,\alpha,x_0^\alpha), \quad
{\bf u}_0^\beta(x,\epsilon)={\bf u}^-(x,\lambda,\epsilon,\beta,x_0^\beta)\\
{\bf u}_1^\alpha(x,\epsilon)&={\bf u}^-(x,\lambda,\epsilon,\alpha,x_1), \quad
{\bf u}_1^\beta(x,\epsilon)={\bf u}^-(x,\lambda,\epsilon,\beta,x_1)\\
{\bf u}_2^\alpha(x,\epsilon)&={\bf u}^+(x,\lambda,\epsilon,\alpha,x_2), \quad
{\bf u}_2^\beta(x,\epsilon)={\bf u}^+(x,\lambda,\epsilon,\beta,x_2)
\end{align*}
Remark that all these exact WKB solutions are chosen so that their asymptotic
expansions in Theorem \ref{formal-to-rigorous} (ii) are valid near the designated
turning points. In particular,  ${\bf u}_0^\alpha(x,\epsilon)$ is a decaying solution
along $C^-(\lambda_0)$ and ${\bf u}_0^\beta(x,\epsilon)$ is a decaying solution
along $C^+(\lambda_0)$. This implies the following lemma.
\begin{lemma}
\label{linear-depend-lemma-of-u}
For any $\lambda_0$ on the asymptotic spectral arcs, there exists a complex
neighbourhood $U=U(\lambda_0)$ of $\lambda_0$ such that a $\lambda\in U$
is an eigenvalue of $\mathfrak{D}_\epsilon$ if and only if
${\bf u}_0^\alpha(x,\epsilon)$ and ${\bf u}_0^\beta(x,\epsilon)$ are linearly
dependent.
\end{lemma}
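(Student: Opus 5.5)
We need to show: $\lambda\in U$ is an eigenvalue iff the two decaying exact WKB solutions ${\bf u}_0^\alpha$ and ${\bf u}_0^\beta$ are linearly dependent. The plan is to identify ${\bf u}_0^\alpha$ and ${\bf u}_0^\beta$ (up to nonzero constant multiples) with the Jost-type solutions of \eqref{miller-ev} that are subdominant at $-\infty$ and $+\infty$ respectively. Once this is done, the lemma is the standard characterization: an eigenvalue of $\mathfrak{D}_\epsilon$ with $\im\lambda\neq0$ is a $\lambda$ for which the $L^2$ solution at $-\infty$ coincides, up to a scalar, with the $L^2$ solution at $+\infty$.

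First fix $U$. For $\lambda$ near $\lambda_0$ the turning points $\alpha(\lambda),\beta(\lambda)$ remain simple and depend analytically on $\lambda$, and (H1) persists. The Stokes geometry deforms continuously, so the regions $\omega^\alpha_{\rm left}$ and $\omega^\beta_{\rm right}$ and the base points $x_0^\alpha,x_0^\beta$ can be kept fixed. The monotonicity conditions (ii) and (iv) of Definition \ref{x-appropriate-admissible-curve} are open conditions on compact parts of $C^\pm$. Near $\pm\infty$ they follow from $A\to0$, $S'\to0$, so that $\sqrt{-V_0}\sim\lambda$ with $\im\lambda\neq0$ (arcs off the real axis). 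Hence for $U$ small there are curves $C^\pm(\lambda)$ from $x_0^\alpha$ to $-\infty$ and from $x_0^\beta$ to $+\infty$ along which $\re z$ is strictly monotone. These are $(\pm)$-progressive paths for the respective symbols.

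Next, use Theorem \ref{formal-to-rigorous} on the domains swept by these paths. It gives ${\bf w}=(1,0)^T+\CO(\epsilon)$, with the series converging along progressive paths all the way to infinity; the integrability of $\mathcal H$ at infinity, coming from $A,S'$ integrable, makes the integral operators $\mathcal I_\pm,\mathcal J$ bounded there. Consequently ${\bf u}_0^\alpha=e^{z/\epsilon}Q{\bf w}^+$ has modulus comparable to $e^{\re z/\epsilon}$ along $C^-$. Since $\re z\sim\re(i\lambda x)$ decreases to $-\infty$ as $x\to-\infty$, this solution decays exponentially like $e^{-|\im\lambda||x|/\epsilon}$. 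The analogous statement holds for ${\bf u}_0^\beta$ at $+\infty$. Since the solution space is two dimensional and the other solution grows, ${\bf u}_0^\alpha$ spans the one-dimensional space of solutions decaying at $-\infty$ along $C^-$, and likewise for ${\bf u}_0^\beta$ at $+\infty$.

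Finally, transfer the statement back to the real axis. By condition (i), the coefficients are holomorphic in the region between $C$ and $\R$, so solutions continue analytically there. At infinity, in sectors where $A,S'$ are small, the system is a perturbation of the constant one with eigenvalues $\mp\lambda$. Therefore the solution decaying along $C^-$ coincides with the one decaying along the real half-line $(-\infty,0)$: both equal the unique subdominant solution in that sector, by a Jost/Levinson argument. The same holds at $+\infty$. Thus $\lambda\in U$ is an eigenvalue if and only if the real-axis decaying solutions are proportional, i.e. $\mathcal W[{\bf u}_0^\alpha,{\bf u}_0^\beta]=0$, which is linear dependence.

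The main obstacle is this last step together with the uniformity at infinity. One must justify convergence of the exact WKB series on unbounded progressive paths, and show that subdominance along the complex contour matches decay on the real line. I would handle the second point by working in a sector $|\im x|<\delta|\re x|$ where $A$ and $S$ are analytic and integrable, rather than relying on monotonicity alone.
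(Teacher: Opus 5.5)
Your overall plan is the paper's. The paper's entire argument is the remark that ${\bf u}_0^\alpha$ decays along $C^-$ and ${\bf u}_0^\beta$ along $C^+$, so $\lambda$ is an eigenvalue exactly when they are proportional. The gap is in how you justify that decay.

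\textbf{The progressive-path step is reversed.} You keep the symbol's base point $x_0^\alpha$ at a finite point of $\omega^\alpha_{\rm left}$. You then claim the curve from $x_0^\alpha$ out to $-\infty$ along $C^-$ is $(+)$-progressive, so that Theorem \ref{formal-to-rigorous}(ii) gives ${\bf w}^+=(1,0)^T+\CO(\epsilon)$ all the way out. In
$w^+_{2n-1}(z)=\int_{z_0}^{z}e^{2(\zeta-z)/\epsilon}\mathcal H(\zeta)w^+_{2n-2}(\zeta)\,d\zeta$
the kernel is bounded only if $\re\zeta\le\re z$ along the path, i.e.\ $\re z$ must increase from the base point to $z$. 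Yet you correctly use that $\re z$ decreases from $x_0^\alpha$ toward $-\infty$, since that is what makes $e^{z/\epsilon}$ decay. So the part of $C^-$ beyond $x_0^\alpha$ lies outside $\Omega_+$. There already $w_1^+(z)=e^{-2z/\epsilon}\int_{z_0}^{z}e^{2\zeta/\epsilon}\mathcal H(\zeta)\,d\zeta$ is typically of size $\epsilon\,e^{2\re(z_0-z)/\epsilon}$, which grows.

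Equivalently, by \eqref{wronsky++} with $x_1\to-\infty$: the Wronskian of ${\bf u}^+(x,\lambda,\epsilon,\alpha,x_0^\alpha)$ with the solution decaying at $-\infty$ is, to leading order, a multiple of $\int_{-\infty}^{z_0}e^{2\zeta/\epsilon}\mathcal H(\zeta)\,d\zeta$. This has no reason to vanish. So with a finite base point, ${\bf u}_0^\alpha$ contains a nonzero multiple of the growing solution and is not $L^2$ at $-\infty$. The exact equivalence claimed in the lemma then fails. This is not a uniformity-at-infinity issue that your sector argument could repair.

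\textbf{The fix: put the base point at infinity.} Take $x_0^\alpha=-\infty$ along $C^-$ and $x_0^\beta=+\infty$ along $C^+$, i.e.\ run the recursion \eqref{integral-recurrence} with integrals starting at $-\infty$ (resp.\ $+\infty$). Then:
\begin{itemize}
\item every path from the base point to a point near $\alpha$ is genuinely $(+)$-progressive;
\item the series converges as soon as $\int_{C^-}|\mathcal H(z)|\,|dz|<\infty$;
\item the resulting ${\bf u}_0^\alpha$ is exactly the Jost solution subdominant at $-\infty$;
\item Theorem \ref{formal-to-rigorous}(ii) still gives its WKB form near $\alpha$, which is what Proposition \ref{3.567} and Theorem \ref{BS} use.
\end{itemize}
This is how the paper's remark that ${\bf u}_0^\alpha$ ``is a decaying solution along $C^-$'' has to be read. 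Note that the required integrability involves $A'$, $AS''$ and $A'S'$ (see the formula for $\mathcal H$). It therefore comes from analyticity of $A,S$ in a strip or sector around $C^\pm$ plus decay, via Cauchy estimates, not from integrability of $A$ and $S'$ alone.

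\textbf{Two smaller corrections.}
\begin{itemize}
\item Your heuristic $\re z\sim\re(i\lambda x)$ gives growth, not decay, as $x\to-\infty$ when $\im\lambda>0$. What you actually need is just the branch for which $\re z$ increases toward $\alpha$ in $\omega^\alpha_{\rm left}$.
\item You must exclude real $\lambda_0$: the vertical arc in the $\mathrm{sech}$ example passes through $0$, and for real $\lambda$ there are no decaying solutions.
\end{itemize}
With these changes, your final step (transferring subdominance from $C^\pm$ to the real half-lines through the region of holomorphy) goes through.
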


Applying Proposition \ref{dep} near $\alpha$ and $\beta$, we have
\begin{align}
\label{wronsk1}
{\mathcal W}[{\bf u}_1^\alpha,{\bf u}_2^\alpha]{\bf u}_0^\alpha+
{\mathcal W}[{\bf u}_2^\alpha,{\bf u}_0^\alpha]{\bf u}_1^\alpha+
{\mathcal W}[{\bf u}_0^\alpha,{\bf u}_1^\alpha]{\bf u}_2^\alpha=0, \\
\label{wronsk2}
{\mathcal W}[{\bf u}_1^\beta,{\bf u}_2^\beta]{\bf u}_0^\beta+
{\mathcal W}[{\bf u}_2^\beta,{\bf u}_0^\beta]{\bf u}_1^\beta+
{\mathcal W}[{\bf u}_0^\beta,{\bf u}_1^\beta]{\bf u}_2^\beta=0.
\end{align}
On the other hand, there is an obvious relation between ${\bf u}_j^\alpha$ and
${\bf u}_j^\beta$ for $j=1,2$:
\be
\label{diagrel}
{\bf u}_1^\beta=
e^{z(\beta,\lambda,\alpha)/\epsilon}{\bf u}_1^\alpha,\quad
{\bf u}_2^\beta=
e^{-z(\beta,\lambda,\alpha)/\epsilon}{\bf u}_2^\alpha
\ee
where $z(\beta,\lambda,\alpha)$ is the action integral between $\alpha$ and $\beta$
\be
\label{actionintegral}
z(\beta,\lambda,\alpha)=
i\int_{\gamma(\alpha\rightsquigarrow \beta)}
\sqrt{-V_0(t,\lambda)}dt.
\ee
It follows that ${\bf u}_0^\alpha(x,\epsilon)$ and ${\bf u}_0^\beta(x,\epsilon)$
are linearly dependent if and only if
$$
{\mathcal W}[{\bf u}_2^\alpha,u_0^\alpha]
{\mathcal W}[{\bf u}_0^\beta,{\bf u}_1^\beta]=
{\mathcal W}[{\bf u}_0^\alpha,{\bf u}_1^\alpha]
{\mathcal W}[{\bf u}_2^\beta,{\bf u}_0^\beta]
e^{2z(\beta,\lambda,\alpha)/\epsilon}.
$$
Recall the functions $g_{\pm}(x,\lambda)=\mp[\lambda+\frac{1}{2}S'(x)\pm iA(x)]$.
For a pair of simple turning points $\alpha$, $\beta$, we define an index
$\delta (\alpha,\beta)$ by
\be
\label{dab}
\delta(\alpha,\beta)=
\left\{
\begin{array}{cll}
-1 &\text{if}\quad
g_+(\alpha,\lambda)=g_+(\beta,\lambda)=0 &\text {or}\quad
g_-(\alpha,\lambda)=g_-(\beta,\lambda)=0,\\
1 &\text{if}\quad
g_+(\alpha,\lambda)=g_-(\beta,\lambda)=0 &\text {or}\quad
g_-(\alpha,\lambda)=g_+(\beta,\lambda)=0.
\end{array}
\right.
\ee
Hence we arrive at the following result.

\newpage

\begin{theorem}
\label{BS}
For any $\lambda_0$ on the asymptotic spectral arcs satisfying condition
{\bf (H1)}, there exists a complex neighborhood $U$ of $\lambda_0$ such that
$\lambda\in U$ is an eigenvalue of  $\mathfrak{D}_\epsilon$  if and only if
$$
m(\epsilon)e^{2z(\beta,\lambda,\alpha)/\epsilon}=1,
\quad\text{with}\quad
m(\epsilon)=
\frac{
{\mathcal W}[{\bf u}_0^\alpha,{\bf u}_1^\alpha]{\mathcal W}[{\bf u}_2^\beta,{\bf u}_0^\beta]}
{{\mathcal W}[{\bf u}_2^\alpha,{\bf u}_0^\alpha]{\mathcal W}[{\bf u}_0^\beta,{\bf u}_1^\beta]
}.
$$
As $\epsilon\downarrow0$, $m(\epsilon)$ has the asymptotic behavior
$$
m(\epsilon)=\delta(\alpha_0,\beta_0)+\CO(\epsilon).
$$
\end{theorem}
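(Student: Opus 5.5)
The proof has two parts: an exact characterization of eigenvalues as a Wronskian identity, and the asymptotics of $m(\epsilon)$ from the local connection results of \S\ref{exact-wkb-connection-1-tp}.

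\emph{Step 1: the exact quantization condition.} By Lemma \ref{linear-depend-lemma-of-u}, for $\lambda$ in a neighbourhood $U$ of $\lambda_0$, $\lambda$ is an eigenvalue if and only if ${\bf u}_0^\alpha$ and ${\bf u}_0^\beta$ are linearly dependent. By Proposition \ref{3.567}, for small $\epsilon$ both $\{{\bf u}_1^\alpha,{\bf u}_2^\alpha\}$ and $\{{\bf u}_1^\beta,{\bf u}_2^\beta\}$ are bases.

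From \eqref{wronsk1} we get
\[
{\bf u}_0^\alpha=-\frac{{\mathcal W}[{\bf u}_2^\alpha,{\bf u}_0^\alpha]{\bf u}_1^\alpha+{\mathcal W}[{\bf u}_0^\alpha,{\bf u}_1^\alpha]{\bf u}_2^\alpha}{{\mathcal W}[{\bf u}_1^\alpha,{\bf u}_2^\alpha]},
\]
and \eqref{wronsk2} gives the analogous formula for ${\bf u}_0^\beta$. Substituting \eqref{diagrel} expresses ${\bf u}_0^\beta$ in the basis $\{{\bf u}_1^\alpha,{\bf u}_2^\alpha\}$, with coefficients
\[
{\mathcal W}[{\bf u}_2^\beta,{\bf u}_0^\beta]\,e^{z/\epsilon}
\quad\text{and}\quad
{\mathcal W}[{\bf u}_0^\beta,{\bf u}_1^\beta]\,e^{-z/\epsilon},
\]
up to a common nonzero factor, where $z=z(\beta,\lambda,\alpha)$. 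Two vectors in a two-dimensional space are dependent exactly when the $2\times2$ determinant of their coefficients vanishes. This gives
\[
{\mathcal W}[{\bf u}_2^\alpha,{\bf u}_0^\alpha]\,{\mathcal W}[{\bf u}_0^\beta,{\bf u}_1^\beta]
={\mathcal W}[{\bf u}_0^\alpha,{\bf u}_1^\alpha]\,{\mathcal W}[{\bf u}_2^\beta,{\bf u}_0^\beta]\,e^{2z/\epsilon},
\]
which is the displayed relation preceding the theorem. Dividing by the left side, which is nonzero by Proposition \ref{3.567}, yields $m(\epsilon)e^{2z/\epsilon}=1$.

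\emph{Step 2: asymptotics of $m(\epsilon)$.} At $\alpha$, the configuration matches Proposition \ref{3.567} directly, with the same choice of $\pm$ and the cut on $l_1^\alpha$. Hence
\[
{\mathcal W}[{\bf u}_0^\alpha,{\bf u}_1^\alpha]=2i(1+\CO(\epsilon)),
\qquad
{\mathcal W}[{\bf u}_2^\alpha,{\bf u}_0^\alpha]=\mp2(1+\CO(\epsilon)),
\]
with the sign fixed by whether $\alpha$ is a zero of $g_+$ or $g_-$.

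At $\beta$, the Stokes lines are numbered clockwise, the cut is on $l_2^\beta$, and ${\bf u}_0^\beta$ is a $(-)$ solution. I would obtain its Wronskians by repeating the proof of Proposition \ref{3.567} in mirror image, i.e. with $\pm$ interchanged and orientation reversed. Concretely:
\begin{itemize}
\item Theorem \ref{formal-to-rigorous}(iii) together with a $(-)$-progressive path from $x_0^\beta$ to $x_1$ gives ${\mathcal W}[{\bf u}_0^\beta,{\bf u}_1^\beta]=c_1(1+\CO(\epsilon))$.
\item The sheet-change identities $z\mapsto -z$, $H\mapsto \mp iH$, ${\bf w}^\pm\mapsto{\bf w}^\mp$ applied across $l_2^\beta$ give ${\mathcal W}[{\bf u}_2^\beta,{\bf u}_0^\beta]=c_2(1+\CO(\epsilon))$.
\end{itemize}
The constants $c_1,c_2$ are explicit, and $c_2$ carries the sign determined by whether $\beta$ is a zero of $g_+$ or $g_-$. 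The ratio is then $m=\frac{2i\,c_2}{(\mp2)\,c_1}+\CO(\epsilon)$. If the two turning points are zeros of the same $g_\pm$, the two sign factors cancel to $-1$ times the orientation factor; otherwise they combine to $+1$. This reproduces $\delta(\alpha,\beta)$ of \eqref{dab}.

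\emph{Main obstacle.} The delicate step is the bookkeeping at $\beta$. One has to track how the branch of $\sqrt{-V_0}$, fixed on the $\alpha$ side with the cut along $C^0$, continues to $\beta$, and how the factor $\mp i$ from $H$ on the other sheet combines with the reversed orientation. Only then do the factors $i$ cancel so that $m$ tends to exactly $\pm1$.

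To pin down the constants reliably, I would first handle one test case, for instance both turning points zeros of $g_+$, by direct computation with $\det Q$. The other cases then follow by the symmetry $g_+\leftrightarrow g_-$, which changes only the sign in \eqref{20}. Uniformity for $\lambda$ near $\lambda_0$ comes from choosing the base points independent of $\lambda$ and using the uniform estimates of Theorem \ref{formal-to-rigorous}(ii) on compact sets.
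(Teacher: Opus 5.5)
There is a genuine gap, and it sits in Step 2 at $\beta$. Your Step 1 coincides with the paper's derivation, and at $\alpha$ you quote Proposition \ref{3.567} correctly. At $\beta$, however, you have interchanged the two Wronskians, and this is exactly where $\delta(\alpha_0,\beta_0)$ is decided.

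The cut $l_2^\beta$ separates $\omega^\beta_{\rm right}\ni x_0^\beta$ from $\omega^{\alpha,\beta}_{\rm down}\ni x_1$. It does not lie between $x_2\in\omega^{\alpha,\beta}_{\rm up}$ and $x_0^\beta$, which are separated only by $l_1^\beta$. Moreover ${\bf u}_0^\beta$ and ${\bf u}_1^\beta$ are both $(-)$-solutions. On a single sheet, Theorem \ref{formal-to-rigorous}(iii) gives for them only the $w_{\rm odd}$-type formula (the analogue of \eqref{wronsky++}), with no nonzero leading constant $c_1$.

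The correct mirror image of Proposition \ref{3.567} interchanges the roles of ${\bf u}_1$ and ${\bf u}_2$. The same-type pair straddling the cut is $({\bf u}_0^\beta,{\bf u}_1^\beta)$. Continuing ${\bf u}_1^\beta$ across $l_2^\beta$ (where $z\mapsto -z$ and $H(x)=\mp iH(\hat x)$) turns it into a multiple of a $(+)$-solution. This gives ${\mathcal W}[{\bf u}_0^\beta,{\bf u}_1^\beta]=\mp2(1+\CO(\epsilon))$, with the sign fixed by whether $\beta$ is a zero of $g_\pm$. The pair $({\bf u}_2^\beta,{\bf u}_0^\beta)$ is of type $(+,-)$ on one sheet. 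So Theorem \ref{formal-to-rigorous}(iii), with a $(+)$-progressive path from $x_2$ to $x_0^\beta$, gives directly ${\mathcal W}[{\bf u}_2^\beta,{\bf u}_0^\beta]=2i(1+\CO(\epsilon))$, independent of $g_\pm$. These are the values the paper uses.

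Because of the swap, your $g_\pm$-sign ends up in the numerator of $m$. You then need an unspecified ``orientation factor'' $-1$ to reach $\delta$. As written, the proposal never decides whether $m$ tends to $+1$ or $-1$, and that is the whole content of the asymptotic claim.

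With the correct assignment no extra factor appears. Continuing out of $\omega^\beta_{\rm right}$ across $l_2^\beta$ goes clockwise around $\beta$, just as continuing out of $\omega^\alpha_{\rm left}$ across $l_1^\alpha$ goes clockwise around $\alpha$. In both cases $\arg(x-\cdot)=\arg(\hat x-\cdot)+2\pi$, so the same rule $H(x)=\mp iH(\hat x)$ holds at both turning points. Write $s=-1$ for a zero of $g_+$ and $s=+1$ for a zero of $g_-$. Then $m=(2i)(2i)/\bigl((2s_\alpha)(2s_\beta)\bigr)+\CO(\epsilon)=-s_\alpha s_\beta+\CO(\epsilon)$, which is precisely $\delta(\alpha_0,\beta_0)$ of \eqref{dab}.
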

\begin{proof}
It remains to check the  asymptotic part of the statement.  We already know from
Proposition \ref{3.567} that as $\epsilon\downarrow0$
\be
\label{asympt-wronsk-1}
{\mathcal W}[{\bf u}_2^\alpha,{\bf u}_0^\alpha]=
\mp 2+\CO(\epsilon),\quad
{\mathcal W}[{\bf u}_0^\alpha,{\bf u}_1^\alpha]=
2i+\CO(\epsilon)
\ee
when $\alpha$ is a zero of $g_\pm$. Similarly, as $\epsilon\downarrow0$ we have
\be
\label{asympt-wronsk-2}
{\mathcal W}[{\bf u}_2^\beta,{\bf u}_0^\beta]=2i+\CO(\epsilon),\quad
{\mathcal W}[{\bf u}_0^\beta,{\bf u}_1^\beta]=\mp 2+\CO(\epsilon)
\ee
when $\beta$ is a zero of $g_\pm$.
These asymptotic formulas immediately give the assertion.
\end{proof}

\begin{remark}(The situation at the endpoints $\lambda_D^{(j)},j=1,2$ of the branches)
It is clear from the proof that the statement of Theorem \ref{BS} is still valid for
$\lambda_0$ at a limit point of the asymptotic spectral arcs where $x_+(\lambda_0)$
and $x_-(\lambda_0)$ coalesce to a double turning point.
\end{remark}

For a detailed analysis near the bifurcation point we refer to \cite{fujii+hatzi+kamvi}.

\subsection{The geometry of a particular case}
\label{geometry}

We now focus on 
the very particular  case
\begin{equation}
\label{our-pot}
A(x)=S(x)=\sech(2x),\quad x\in\mathbb{R}.
\end{equation}
There is nothing really special about this example; it just happens that it was the
first case studied numerically  in \cite{bron} and subsequently in \cite{mil}.
\footnote{A different example is studied  in \cite{kcv}.}
But  clearly  what happens here is  true for a very general class
of data $A$ and $S$.

The numerical observation of Bronski was that the asymptotic spectrum consists of five
arcs. There is a vertical line segment $[-i\mu_s, i \mu_s]$ where $\mu_s$ is found to
be approximately $0.28$. There are also four slightly curved arcs, lying in  different
quadrants of the spectral plane. They each connect $i\mu_s$ to (a value of) $\lam_{D} $,
where
\be\label{lamda-double}
\lam_D=
i\sigma\sqrt{\frac{1}{2}+\frac{1+i\tau\sqrt{7}}{8}}
\Bigg(1-\frac{1+i\tau\sqrt{7}}{4}\Bigg)
\quad\text{where}\quad\sigma,\tau=\pm1
\ee
are  the solutions of the transcendental equations
\be\nn
\tanh(2x)=\frac{1+i\tau\sqrt{7}}{4i\sigma}
\quad\text{where}\quad\sigma,\tau=\pm1
\ee
We adopt the notation
$\lam_D^{(j)}$, $j=1,\dots,4$, by requiring that  the point $\lam_D^{(j)}$ is located in
the $j^{\rm th}$ quadrant in the complex $\lam$-plane.

The numerics in \cite{mil} will not be presented here in detail. Summarizing, one
considers different values of $\lambda$ in the complex plane and then for each such
value one investigates whether there exists an admissible path in the complex
x-plane joining $-\infty$ and $+\infty$.

Our main contribution in \cite{fujii+hatzi+kamvi} was to provide rigorous asymptotics
for the eigenvalues lying on the asymptotic spectral arcs. Our results are very much
like the Bohr-Sommerfeld relations in the zero phase case!

We conclude this subsection  with the  quantization condition of eigenvalues along
the union of the asymptotic spectral arcs away from  a neighborhood of the bifurcation
point $ \lambda_\otimes = i \mu_s$. For the analogous statement near the bifurcation
point (and the proofs) we refer to Theorem 2.23 in \cite{fujii+hatzi+kamvi}.

\begin{theorem}
For any $\epsilon$-independent $\delta>0$ and
$\lambda_0\neq0$ on the union of the three asymptotic spectral arcs,
 satisfying
$|\lambda_0-\lambda_\otimes|>\delta$, there exist a complex neighborhood $U$ of $\lambda_0$
and a function $m_{j}(\epsilon)$ with a uniform asymptotic behavior
$m_{j}(\epsilon)=-1+\CO(\epsilon)$ in $U$, such that $\lambda\in U$ is an eigenvalue of
$\mathfrak{D}_\epsilon$  if and only if
$$
m_{j}(\epsilon)e^{2z_j(\beta,\lambda,\alpha)/\epsilon}=1.
$$
The index $j=1,2,3$ denotes the choice of the asymptotic spectral arc. The phase $z_j$
also depends on that choice (see (4.13)).
\end{theorem}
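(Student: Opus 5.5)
The plan is to reduce the statement to Theorem \ref{BS}, applied arc by arc to the potential \eqref{our-pot}. The only new content is then geometric and combinatorial. We must show that every such $\lambda_0$ satisfies the hypotheses of Theorem \ref{BS}. We must also show that the index $\delta(\alpha_0,\beta_0)$ from \eqref{dab} equals $-1$ on each of the three arcs.

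First I would parametrize the turning points. For $A=S=\sech(2x)$ one has $S'(x)=-2\sech(2x)\tanh(2x)$. Then $g_\pm(x,\lambda)=\mp[\lambda-\sech(2x)\tanh(2x)\pm i\sech(2x)]$, so the turning points are the roots of
\[
\sech(2x)\bigl(\tanh(2x)\mp i\bigr)=\lambda .
\]
The pair $\{x_-(\lambda),x_+(\lambda)\}$ for arc $j$ is the pair singled out in Miller's construction, with phase $z_j(\beta,\lambda,\alpha)$ as in (4.13). For each $\lambda_0$ with $|\lambda_0-\lambda_\otimes|>\delta$, I would check three things. First, (H1) holds: $C^0(\lambda_0)$ contains no third turning point. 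Second, the two endpoints are simple turning points; this is true since the only double points are the $\lambda_D^{(j)}$, and those are covered by the Remark following Theorem \ref{BS}. Third, the admissible contour avoids the poles of $\sech(2x)$ at $x=i\pi/4+i\pi k/2$, so condition (i) holds. The restriction away from $\lambda_\otimes$ is exactly what keeps a third turning point off the Stokes segment: at $\lambda_\otimes$ the three arcs bifurcate and a Stokes line from one pair hits another turning point.

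Next I would determine $\delta$. On the vertical segment ($j$ fixed, $\lambda=i\mu$), the symmetry $x\mapsto -\bar x$ together with reality of $A$ and $S$ sends zeros of $g_+$ to zeros of $g_+$ and zeros of $g_-$ to zeros of $g_-$. The pair $\alpha,\beta$ is exchanged by this symmetry, so both are zeros of the same $g_\pm$, which gives $\delta=-1$. On the two curved arcs the symmetry is lost, so I would instead argue by continuity. The labelling (zero of $g_+$ versus zero of $g_-$) of a simple turning point is locally constant in $\lambda$, since $g_+$ and $g_-$ cannot vanish simultaneously when $A\neq0$. It therefore suffices to check the labelling at one convenient point of each arc, either near $\lambda_D^{(j)}$ or near $\lambda_\otimes$ from the curved side, where both turning points approach the same root branch. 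Indeed, at $\lambda_D$ they coalesce into a double zero of a single $g_\pm$, which forces $\delta=-1$ all along the arc.

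The uniformity of $m_j(\epsilon)=-1+\CO(\epsilon)$ on $U$ follows from the uniform statements in Theorem \ref{formal-to-rigorous}(ii) and Proposition \ref{3.567}, since the base points and progressive paths can be chosen locally constant in $\lambda\in U$. I expect the main obstacle to be the global geometric verification: the existence of $C^\pm$ satisfying (ii) and (iv) and the validity of (H1) along whole arcs. I would handle this by a compactness and continuity argument starting from the Stokes geometry at the points of the arcs, relying on the explicit analysis (and figures) in \cite{fujii+hatzi+kamvi} and \cite{mil}. For the curved arcs I would fall back on numerical verification only for the topological configuration, and on openness of the admissibility conditions otherwise.
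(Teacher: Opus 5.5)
Your proposal is correct and follows essentially the paper's route: the statement is Theorem \ref{BS} specialized to $A=S=\operatorname{sech}(2x)$, with (H1) holding once $\lambda_0$ stays away from $\lambda_\otimes$, and with $\delta(\alpha_0,\beta_0)=-1$ on each arc because both ends of $C^0$ are zeros of the same $g_\pm$ (conditions (i), (ii), (iv) need no separate check, since they are built into the definition of the arcs). Your symmetry and coalescence arguments for this sign are sound, with one small correction: $x\mapsto-\bar x$ preserves the zero sets of $g_\pm(\cdot,i\mu)$ because $A$ and $S$ are even as well as real, not by reality alone.
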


\subsection{Eigenvalues near zero}
\label{evs-near-zero}
As in the previous sections the distribution of the eigenvalues near zero needs special
care. In \cite{fujii+hatzi+kamvi} we have included a discussion of this matter. The exact
WKB method does not (seem to) provide the estimates necessary for the asymptotic analysis
of the inverse scattering problem. We have instead made use of the Olver theory, as
further developed in \cite{olver1978}. In the special $sech$ case there is no
complication, because of the explicit exponential behavior of the initial data at
infinity.

\end{document}